\newcommand{\lyxaddress}[1]{
\par {\raggedright #1
\vspace{1.4em}
\noindent\par}
}
\theoremstyle{plain}
\newtheorem{thm}{\protect\theoremname}
  \theoremstyle{plain}
  \newtheorem{lem}[thm]{\protect\lemmaname}
  \theoremstyle{plain}
  \newtheorem{prop}[thm]{\protect\propositionname}
  \theoremstyle{remark}
  \newtheorem{rem}[thm]{\protect\remarkname}
  \providecommand{\lemmaname}{Lemma}
  \providecommand{\propositionname}{Proposition}
  \providecommand{\remarkname}{Remark}
\providecommand{\theoremname}{Theorem}
\begin{document}
\global\long\def\N{\mathbb{N}}

\global\long\def\Nbar{\bar{\N}}

\global\long\def\R{\mathbb{R}}

\global\long\def\B{\mathcal{B}}

\global\long\def\C{\mathcal{C}}

\global\long\def\Ck{\C_{\mathrm{K}}}

\global\long\def\Cnull{\C_{\mathrm{0}}}

\global\long\def\b{\mathrm{b}}

\global\long\def\S{\boldsymbol{S}}

\global\long\def\indicator{\boldsymbol{1}}

\global\long\def\Dom{\mathcal{D}}

\global\long\def\Range{\mathcal{R}}

\global\long\def\P{\mathrm{P}}

\global\long\def\E{\mathrm{E}}

\global\long\def\d{\mathrm{d}}

\title{Intertwining of the Wright-Fisher diffusion}

\author{Tobi\'{a}\v{s} Hudec\\
e-mail: \href{mailto:tobias.hudec@gmail.com}{tobias.hudec@gmail.com}}

\maketitle

\lyxaddress{Institute of Information Theory and Automation of the CAS\\
Pod Vod\'{a}renskou v\v{e}\v{z}\'{i} 4\\
CZ-182 08\\
Prague 8\\
Czech Republic}
\begin{abstract}
It is known that the time until a birth and death process reaches
a certain level is distributed as a sum of independent exponential
random variables. Diaconis, Miclo and Swart gave a probabilistic proof
of this fact by coupling the birth and death process with a pure birth
process such that the two processes reach the given level at the same
time. Their coupling is of a special type called intertwining of Markov
processes. We apply this technique to couple the Wright-Fisher diffusion
with reflection at $1/2$ and a pure birth process. We show that in
our coupling the time of absorption of the diffusion is a.\,s.\ equal
to the time of explosion of the pure birth process. The coupling also
allows us to interpret the diffusion as being initially reluctant
to get absorbed, but later getting more and more compelled to get
absorbed.\end{abstract}
\begin{verse}
\textbf{Keywords}: intertwining of Markov processes, Wright-Fisher
diffusion, pure birth process, time of absorption, coupling.

\textbf{Classification}: 60J60, 60J35, 60J27.
\end{verse}

\section{Introduction and the main result}

\subsection{Introduction}

It is known that the time until a birth and death process $X_{t}$
started at the origin reaches a certain level is distributed as a
sum of independent exponential variables whose parameters are the
negatives of the non-zero eigenvalues of the generator of the process
stopped at the given level (see Karlin~\cite{karlin1959coincidence}).
Diaconis and Miclo~\cite{diaconisMiclo2009quasi-stationarity} and
Swart~\cite{swart2010intertwining} gave a probabilistic proof of
this fact by finding a pure birth process $Y_{t}$ which reaches the
given level at the same time as $X_{t}$. The technique that Diaconis,
Miclo and Swart employ is called intertwining of Markov processes.
This technique was developed by Rogers and Pitman~\cite{rogersPitman1981markov},
Diaconis and Fill~\cite{diaconisFill90strong-stationary-times},
and Fill~\cite{fill92strong-stationary-duality}. It allows them
to add structure to the process $X_{t}$ such that it is initially
reluctant to be absorbed, but after each exponential time (which corresponds
to jump times of $Y_{t}$) it changes its behavior to be more and
more compelled to be absorbed. Since one-dimensional diffusions can
be obtained as limits of birth and death processes, it is interesting
to investigate whether this technique can be extended to the case
that $X_{t}$ is a diffusion. Since the general case is too difficult,
in this paper we consider the case that $X_{t}$ is the Wright-Fisher
diffusion with reflection at $1/2$, which has state-space $[1/2,1]$
and is absorbed at $1$. The generator and the semigroup of this diffusion
have the nice property that they map polynomials to polynomials of
the same order, which simplifies our proofs. We need that the diffusion
is reflected at $1/2$ for technical reasons; without it, one of our
proofs would not work (see Remark~\ref{rem:WF-fail}). We construct
an explosive pure birth process $Y_{t}$ such that $X_{t}$ is absorbed
at the same time as $Y_{t}$ explodes.

The idea of Diaconis, Miclo and Swart can be summarized as follows.
For a given transition semigroup $P_{t}$ of a birth and death process
$X_{t}$ on $\left\{ 0,\dots,n\right\} $ absorbed at $n$, Swart
finds a transition semigroup $Q_{t}$ of a pure birth process $Y_{t}$
on $\left\{ 0,\dots,n\right\} $ and a probability kernel $K$ which
satisfies
\begin{equation}
P_{t}K=KQ_{t}\qquad(t\ge0).\label{eq:intr-PK=00003DKQ}
\end{equation}
The algebraic relation~(\ref{eq:intr-PK=00003DKQ}) is called intertwining,
which gives the name to the intertwining of Markov processes. Swart
builds on an earlier work of Diaconis and Miclo, who found an intertwining
of the form $KP_{t}=Q_{t}K$. However, we focus on Swart's construction,
because our work on the Wright-Fisher diffusion is closer in spirit
to his. He uses a result proved by Fill~\cite{fill92strong-stationary-duality}
which says that if $X_{t}$ and $Y_{t}$ are Markov processes with
finite state-spaces related by~(\ref{eq:intr-PK=00003DKQ}), then
the two processes can be coupled (i.\,e.\ defined on the same probability
space) such that\footnote{Under certain conditions, Fill showed that this coupling can be extended
to countably infinite state-spaces. Fill built on earlier work of
Diaconis and Fill~\cite{diaconisFill90strong-stationary-times},
where an analogous result is proved for processes with discrete time.}
\begin{equation}
\P\left(Y_{t}=y|X_{u},0\leq u\leq t\right)=K\left(X_{t},y\right)\qquad\mathrm{a.s.}\quad(t\geq0).\label{eq:intr-coupling}
\end{equation}
Using (\ref{eq:intr-coupling}) and the fact that his kernel satisfies
\begin{equation}
K(x,n)=\indicator_{[x=n]}:=\begin{cases}
1, & \mathrm{if}\,x=n,\\
0, & \mathrm{otherwise,}
\end{cases}\label{eq:intr-K}
\end{equation}
Swart proves that $X_{t}$ and $Y_{t}$ can be coupled such that the
times of absorption of $X_{t}$ and $Y_{t}$ are a.\,s.\ the same. 

In this paper, we derive analogue results for the case that $X_{t}$
is the Wright-Fisher diffusion with reflection at zero. We find an
explosive pure-birth process $Y_{t}$ on $\mbox{\ensuremath{\bar{\N}}:=}\N\cup\left\{ \infty\right\} :=\left\{ 0,1,\dots,\infty\right\} $
and a probability kernel $K$ from $[0,1]$ to $\bar{\N}$ satisfying
the intertwining relation~(\ref{eq:intr-PK=00003DKQ}) and
\[
K\left(x,\infty\right)=\indicator_{\left[x=1\right]}.
\]
We couple the two processes such that they satisfy~(\ref{eq:intr-coupling})
which allows us to conclude that the time of absorption of $X_{t}$
is a.\,s.\ equal to the time of explosion of $Y_{t}$. But since
the time of explosion of the pure birth process is the sum of independent
exponential variables whose intensities are the birth rates of $Y_{t}$,
this gives us a new proof of the distribution of the time to absorption
of $X_{t}$.\footnote{Just as with the birth and death processes, the distribution of the
time of absorption of the Wright-Fisher diffusion has long been known
(see e.\,g.\ Kent~\cite{kent1982spectral}), but our proof is new.}

\subsection{Intertwining of the Wright-Fisher diffusion}

Define
\[
\Dom(G)=\left\{ f\in\C^{2}[0,1];\,\frac{\partial}{\partial x}f(0)=0\right\} 
\]
and
\begin{equation}
Gf(x)=\left(1-x^{2}\right)\frac{\partial^{2}}{\partial x^{2}}f(x),\qquad f\in\Dom(G),x\in[0,1].\label{eq:WF-generator}
\end{equation}
In the appendix we show that $G$ is closable and its closure generates
a Feller semigroup, which we denote $P_{t}$. We also show that the
associated Markov process, which we call the Wright-Fisher diffusion
with reflection at zero, has continuous sample paths. Note that the
generator of the Wright-Fisher diffusion is usually defined as
\begin{equation}
\frac{1}{2}x(1-x)\frac{\partial^{2}}{\partial x^{2}}f(x),\quad f\in\C^{2}[0,1],\,x\in[0,1]\label{eq:WF-=00005B0,1=00005D-generator}
\end{equation}
(see e.\,g.\ Liggett~\cite[Example 3.48]{liggett2010continuous}).
However, if $\tilde{X}_{t}$ is generated by~(\ref{eq:WF-=00005B0,1=00005D-generator}),
then $X_{t}=\left|2\tilde{X}_{2t}-1\right|$ is generated by~(\ref{eq:WF-generator}).

Define $H$ as the generator of an explosive pure birth process on
$\bar{\N}$ which jumps from $y$ to $y+1$ with the rate 
\[
\lambda_{y}=\left(2y+1\right)\left(2y+2\right),\,y\in\N.
\]
That is, define $H$ as an operator from $\R^{\Nbar}$ to $\R^{\Nbar}$
by
\begin{eqnarray*}
Hf(y) & = & \lambda_{y}\left(f(y+1)-f(y)\right),\quad y\in\N,\\
Hf(\infty) & = & 0,
\end{eqnarray*}
where $f$ is in $\R^{\Nbar}$. It is shown in the Appendix that the
restriction of $H$ to a suitable domain is the generator of a Feller
semigroup on $\C\left(\Nbar\right)$, which we denote by $Q_{t}$.
Define a probability kernel from $[0,1]$ to $\bar{\N}$ by
\begin{equation}
K(x,y)=\begin{cases}
\left(1-x^{2}\right)x^{2y}, & \mathrm{if\,}0\leq y<\infty,\\
\indicator_{[x=1]}, & \mathrm{if\,}y=\infty.
\end{cases}\label{eq:K-def}
\end{equation}
It can be shown that $K$ maps $\C\left(\bar{\N}\right)$ into $\C[0,1]$.
We claim that there is an intertwining relation:
\begin{thm}
\label{thm:PK=00003DKQ}We have
\begin{equation}
P_{t}K=KQ_{t},\qquad(t\geq0).\label{eq:PK=00003DKQ}
\end{equation}

\end{thm}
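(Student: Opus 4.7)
The plan is to establish the intertwining first at the level of generators on a suitably large class of functions, and then lift it to the semigroup level via uniqueness of the abstract Cauchy problem.

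\emph{Step 1 (generator level).} For each $y\in\N$, the function $x\mapsto K(x,y)=x^{2y}-x^{2y+2}$ is an even polynomial in $x$, hence lies in $\Dom(G)$. A direct differentiation yields
\[
GK(\cdot,y)=\lambda_{y-1}K(\cdot,y-1)-\lambda_{y}K(\cdot,y),\qquad\lambda_{-1}:=0,
\]
which coincides with $K(H\indicator_{\{y\}})$, since $H\indicator_{\{y\}}=\lambda_{y-1}\indicator_{\{y-1\}}-\lambda_{y}\indicator_{\{y\}}$. By linearity, $GKf=KHf$ for every $f\in\C(\Nbar)$ whose support in $\Nbar$ is a finite subset of $\N$.

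\emph{Step 2 (semigroup identity for finitely supported $f$).} Fix such an $f$ with support in $\{0,\dots,n\}$. Since $Y$ is non-decreasing, $Q_{t}f(y)=\E_{y}f(Y_{t})$ vanishes for $y>n$, so $Q_{t}f$ is again supported in $\{0,\dots,n\}$ and lies in the finite-dimensional subspace $W_{n}=\text{span}\{\indicator_{\{0\}},\dots,\indicator_{\{n\}}\}$. On $W_{n}$ the operator $H$ restricts to a finite matrix, so $Q_{t}f\in\Dom(H)$ and $\partial_{t}Q_{t}f=HQ_{t}f$ holds in $\C(\Nbar)$. Since $K\colon\C(\Nbar)\to\C[0,1]$ is bounded,
\[
\partial_{t}KQ_{t}f=KHQ_{t}f=GKQ_{t}f,
\]
the second equality coming from Step~1 applied to the finitely supported $Q_{t}f$. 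Hence $u(t):=KQ_{t}f$ is a differentiable $\C[0,1]$-valued solution of $u'=Gu$ with initial value $u(0)=Kf\in\Dom(G)$, and the standard Cauchy-problem uniqueness for the Feller semigroup $P_{t}$---e.g.\ by differentiating $s\mapsto P_{t-s}u(s)$ on $[0,t]$---yields $KQ_{t}f=P_{t}Kf$.

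\emph{Step 3 (extension to all of $\C(\Nbar)$).} For the constant function $1$, summing the geometric series in~(\ref{eq:K-def}) gives $K1\equiv1$ on $[0,1]$, so $P_{t}K1=1=KQ_{t}1$ is trivial. An arbitrary $f\in\C(\Nbar)$ decomposes as $f=f(\infty)+(f-f(\infty))$; the second summand vanishes at infinity and is therefore a uniform limit of finitely supported functions, so the identity extends to it by continuity of the bounded operators $P_{t}$, $Q_{t}$ and $K$, proving~(\ref{eq:PK=00003DKQ}).

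The principal obstacle is a domain subtlety: $\Dom(H)$ has not been identified explicitly, and for a generic $g\in\Dom(H)$ the function $Kg$ is an infinite polynomial series in $x$ to which the unbounded operator $G$ cannot be applied naively term by term. The monotonicity of $Y$ circumvents this by keeping $Q_{t}f$ inside a finite-dimensional $H$-invariant subspace $W_{n}$, which in turn keeps $KQ_{t}f$ inside the finite-dimensional $G$-invariant subspace $V_{n}=\text{span}\{K(\cdot,0),\dots,K(\cdot,n)\}$, reducing the whole analysis to an identification of two finite matrix ODEs.
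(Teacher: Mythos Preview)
Your proof is correct and follows essentially the same route as the paper: establish $GK=KH$ on the class of eventually-constant functions (you use the basis $\{\indicator_{\{y\}}\}\cup\{1\}$, the paper works directly with $f\in D_H$), then lift to the semigroup level by the Cauchy-problem uniqueness argument, exploiting that the monotonicity of $Y$ keeps $Q_t f$ inside a finite-dimensional $H$-invariant subspace. The only organisational difference is that the paper packages your Step~2/Step~3 into an abstract lemma (``generator intertwining on a core $\Leftrightarrow$ semigroup intertwining'') and invokes the core property of $D_H$, whereas you carry out the same computation inline.
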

Using~(\ref{eq:PK=00003DKQ}), we are able to couple the two processes
in the spirit of~\cite{diaconisMiclo2009quasi-stationarity,fill92strong-stationary-duality,swart2010intertwining}.
We define the state-space $\S$ of the coupled process as the one-point
compactification of $[0,1]\times\N$, where we denote the point at
infinity by $(1,\infty)$. Using this notation, we can think of $\S$
as a subset of $[0,1]\times\Nbar$, but keep in mind that the topology
of $\S$ is not the one induced by $[0,1]\times\Nbar$. Note that
if the coupled process is to satisfy an analogue of~(\ref{eq:intr-coupling}),
then it would be natural to construct it on the space
\[
\left\{ (x,y)\in[0,1]\times\Nbar;\,K(x,y)>0\right\} ,
\]
as was done for Markov processes with discrete state-spaces~\cite{diaconisFill90strong-stationary-times,fill92strong-stationary-duality}.
However, this space is not compact, so if we want to use the theory
of Feller semigroups we must compactify it (either implicitly or explicitly).
It turns out that $\S$ is the right compactification.

An analogous result to the following theorem was proved by Fill~\cite[Theorem 2]{fill92strong-stationary-duality}
for processes with discrete state-spaces and by Diaconis and Fill~\cite[Theorem 2.33]{diaconisFill90strong-stationary-times}
for processes with discrete time and space. 
\begin{thm}
\label{thm:coupling}There exists a Feller process $\left(X_{t},Y_{t}\right)$
on $\S$ such that
\begin{equation}
\E\left(f\left(Y_{s+t}\right)|X_{u},Y_{u},0\leq u\leq s\right)=\left(Q_{t}f\right)\left(Y_{s}\right)\,\mathrm{a.s.}\label{eq:second-margin}
\end{equation}
for all $f\in\C\left(\bar{\N}\right)$ and $s,t\geq0$. Hence, $Y_{t}$
on its own is a pure birth process on $\bar{\N}$ with birth rates
$\lambda_{y}$. If the initial distribution satisfies
\begin{equation}
\pi_{0}^{(X,Y)}(A\times\{y\})=\int_{A}K(x,y)\pi_{0}^{X}(\d y),\label{eq:coupling-initial}
\end{equation}
where $\pi_{0}^{X}$ is an arbitrary probability measure on $[0,1]$,
then $X_{t}$ on its own is the Wright-Fisher diffusion with reflection
at zero with initial distribution $\pi_{0}^{X}$ and we have
\begin{equation}
\P\left(Y_{t}=y|X_{s},0\leq s\leq t\right)=K\left(X_{t},y\right)\quad\mathrm{a.s.}\label{eq:averaging}
\end{equation}
for all $y\in\bar{\N}$ and $t\geq0$.
\end{thm}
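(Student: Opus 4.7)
The plan is to construct $(X_t, Y_t)$ by exhibiting a Feller semigroup $R_t$ on $\C(\S)$ whose transitions realise the conditional structure prescribed by $K$, and then to extract (\ref{eq:second-margin})--(\ref{eq:averaging}) from the form of $R_t$. The construction follows the strong stationary duality template of Fill and Diaconis--Fill, adapted to the present continuous/denumerable hybrid state space, with the intertwining (\ref{eq:PK=00003DKQ}) of Theorem \ref{thm:PK=00003DKQ} playing the pivotal algebraic role.

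Concretely, for $(x, y) \in [0, 1) \times \N$ and $t > 0$, the candidate transition kernel is
$$R_t\bigl((x, y), (\d x', y')\bigr) := \frac{K(x', y')\, Q_t(y, y')}{(P_t K)(x, y')}\, P_t(x, \d x'),$$
extended by $R_t((1, \infty), \cdot) := \delta_{(1, \infty)}$; note $(P_t K)(x, y') = (K Q_t)(x, y')$ by the intertwining, and summing over $y'$ and integrating over $x'$ gives $\sum_{y'} Q_t(y, y') = 1$, so $R_t$ is a probability kernel. The key identity underlying everything is
$$\sum_{y} K(x, y)\, R_t\bigl((x, y), (\d x', y')\bigr) = K(x', y')\, P_t(x, \d x'),$$
obtained by pulling $K(x, y)$ inside the sum, applying $\sum_y K(x, y)\, Q_t(y, y') = (K Q_t)(x, y') = (P_t K)(x, y')$ and cancelling. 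Starting from this I would verify the semigroup law $R_{s+t} = R_s R_t$ (composition of the two kernels plus $P_s P_t = P_{s+t}$, $Q_s Q_t = Q_{s+t}$ and repeated use of the intertwining make the normalising factors telescope), and then the Feller property: strong continuity at $t=0$ is inherited from that of $P_t$ and $Q_t$; preservation of $\C(\S)$ requires analysing continuity at the compactification point $(1, \infty)$, using that $K(x, \cdot)$ concentrates on arbitrarily large $y$ as $x \to 1$. Standard Feller theory then yields the c\`adl\`ag process.

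The three assertions are read off $R_t$. For (\ref{eq:second-margin}), applying $R_t$ to an $f \in \C(\Nbar)$ depending only on the $y$-coordinate collapses the $P_t$-integral against $(P_t K)^{-1}$ to unity, giving $R_t f(x, y) = (Q_t f)(y)$, so $Y$ is Markov with semigroup $Q_t$ in the joint filtration. For the $X$-marginal under (\ref{eq:coupling-initial}), the second displayed identity shows that the $K$-form $K(x, y)\,\pi_0^X(\d x)$ propagates to $K(x', y')\,(\pi_0^X P_t)(\d x')$, whence $X_t \sim \pi_0^X P_t$ and the $X$-marginal is the Wright--Fisher diffusion; the averaging identity (\ref{eq:averaging}) follows from the same preservation by induction on the conditioning times using the Markov property of $(X, Y)$. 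The main obstacle is the behaviour at the compactification point $(1, \infty) \in \S$: the continuity of $R_t$ there must be checked despite the vanishing of $K$ along $\{1\} \times \N$, and one must also verify that when $Y_t \to \infty$ the diffusion $X_t$ converges to $1$ simultaneously, so that the sample paths stay inside $\S$ under its one-point compactification topology; this boundary analysis is precisely what forces the choice of $\S$ over $[0, 1] \times \Nbar$.
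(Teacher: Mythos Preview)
Your kernel $R_t$ is exactly the family $\boldsymbol{P}^{(t)}$ that the paper itself writes down in Section~\ref{sub:Generator-derivation}, and it is stated there explicitly that $\boldsymbol{P}^{(t)}$ does \emph{not} satisfy the Chapman--Kolmogorov equations. Your parenthetical assertion that ``the normalising factors telescope'' is precisely where the argument breaks. In the composition $R_sR_t$ one must integrate, against $P_s(x,\d x')$, the factor $K(x',y')\bigl[(P_tK)(x',y'')\bigr]^{-1}$; this depends on the intermediate point $x'$ in a way the intertwining $P_tK=KQ_t$ gives no purchase on, and after summing over $y'$ the result still depends nontrivially on $x'$, so it cannot collapse to $\bigl[(P_{s+t}K)(x,y'')\bigr]^{-1}P_{s+t}(x,\d x'')$. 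This is the same obstruction Fill met in the discrete-state continuous-time case, and is why he too abandons $\boldsymbol{P}^{(t)}$ and passes to its generator.

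The paper's route is therefore genuinely different from yours. Formula~(\ref{eq:approximation-of-coupled-semigroup}) is used only heuristically, to \emph{guess} the operator $\boldsymbol{G}$ of~(\ref{eq:coupled-generator-general})--(\ref{eq:coupled-generator-explicit}); the rigorous construction of the Feller semigroup (Theorem~\ref{thm:generator}) is a Hille--Yosida argument, verifying the positive maximum principle for $\boldsymbol{G}$ and exhibiting an exhausting chain of finite-dimensional $\boldsymbol{G}$-invariant subspaces, namely functions that are even polynomials of bounded degree in $x$ and eventually constant in $y$. This is exactly where the polynomial-preserving structure of the Wright--Fisher generator and the reflection at $0$ enter (cf.\ Remark~\ref{rem:WF-fail}), and it bypasses entirely the delicate boundary analysis at $(1,\infty)$ that you identify as the main obstacle. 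Once the semigroup $\boldsymbol{P}_t$ exists, (\ref{eq:second-margin}) is obtained from the generator identity $\boldsymbol{G}\Psi=\Psi H$ via Theorem~\ref{thm:Semigroup-intertwining}, while the $X$-marginal and~(\ref{eq:averaging}) come from the Rogers--Pitman Markov-functions criterion (Theorem~\ref{thm:Markov-function}) after checking $\Lambda\boldsymbol{G}=G\Lambda$ on $\Dom(\boldsymbol{G})$, rather than by the direct kernel manipulations you sketch.
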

Note that if both $X_{t}$ and $Y_{t}$ start from zero, then~(\ref{eq:coupling-initial})
is satisfied. Using Theorem~\ref{thm:coupling} we can prove that
the time of absorption of the diffusion is a.\,s.\ equal to the
time of explosion of the pure birth process. Indeed, from~(\ref{eq:averaging})
we have
\[
\P\left(X_{t}\in A,Y_{t}\in B\right)=\E\left(\indicator_{A}\left(X_{t}\right)K\left(X_{t},B\right)\right).
\]
But since $K(x,\cdot)$ is concentrated on $\N$ for $x<1$ and on
$\{\infty\}$ for $x=1$, we have
\[
\P\left(X_{t}<1,Y_{t}=\infty\right)=\P\left(X_{t}=1,Y_{t}<\infty\right)=0.
\]

\subsection{Discussion}

In addition to proving the a.\,s.\ equality of the time of absorption
and the time of explosion of the two processes, Theorem~\ref{thm:coupling}
shows more about the underlying structure. By inspection of the formula
for the coupled generator~(\ref{eq:coupled-generator-explicit})
below we see that conditionally on $Y_{t}=y\in\N$ we can interpret
$X_{t}$ as the Wright-Fisher diffusion with reflection at zero and
with additional drift, which at the point $X_{t}=x$ equals
\[
\frac{4y}{x}-4(y+1)x.
\]
It can be shown that the scale function $u(x)$ and the speed measure
$m(\d x)$ of this diffusion satisfy
\begin{eqnarray*}
u'(x) & = & \frac{1}{x^{4y}\left(1-x^{2}\right)^{2}},\\
m(\d x) & = & x^{4y}\left(1-x^{2}\right)\d x.
\end{eqnarray*}
Hence, by Mandl~\cite[pp.~24--25]{mandl1968analytical}, both boundaries
are entrance for $y>0$ and $0$ is a regular boundary while $1$
is an entrance boundary for $y=0$. In particular, the coupled process
lives on the set $\left\{ (x,y)\in[0,1]\times\Nbar;\,K(x,y)>0\right\} $
as one might expect.

Moreover, we can see that there is an equilibrium point
\[
x_{y}=\sqrt{\frac{y}{y+1}}
\]
such that the drift is positive when $x<x_{y}$ and negative when
$x>x_{y}$. We can interpret this as, conditionally on $Y_{t}=y$,
$X_{t}$ is pushed toward the equilibrium point $x_{y}$. Obviously,
$x_{y}$ is monotonous in $y$ and goes from $0$ to $1$ as $y$
goes from $0$ to $\infty$. Thus, we can think of $X_{t}$ as being
initially reluctant to be absorbed, but later getting more and more
compelled to get absorbed.

In our paper we construct Markov processes from generators using the
Hille-Yosida theorem. We could also construct them as solutions to
Martingale problems or stochastic differential equations. However,
we chose the Hille-Yosida theorem for its simplicity. Theorem~\ref{thm:PK=00003DKQ}
extends results of Diaconis and Miclo~\cite{diaconisMiclo2009quasi-stationarity}
and Swart~\cite{swart2010intertwining}, who proved similar theorems
for birth and death processes. Theorem~\ref{thm:coupling} extends
results of Fill~\cite{fill92strong-stationary-duality}, who proved
similar result for Markov processes with continuous time and discrete
state-space, and of Diaconis and Fill~\cite{diaconisFill90strong-stationary-times},
who proved it for the case of discrete time and space. It remains
an open problem whether results like Theorems~\ref{thm:PK=00003DKQ}
and \ref{thm:coupling} hold for other diffusions than the modified
version of the Wright-Fisher diffusion we consider in our paper. It
seems that our proof of Theorem~\ref{thm:PK=00003DKQ} does not exploit
any peculiarity of the Wright-Fisher diffusion and we believe it could
be extended to other types of diffusions as well. On the other hand,
our proof of Theorem~\ref{thm:coupling} depends strongly on the
fact that the generator of the Wright-Fisher diffusion maps polynomials
to polynomials of the same order, and it seems that entirely different
proof techniques would be required for other diffusions.

\section{Proofs}

\subsection{Intertwining}

To prove Theorem~\ref{thm:PK=00003DKQ} we need to show that there
is an intertwining between semigroups $P_{t}$ and $Q_{t}$. The following
theorem says that we can show this by proving that there is an intertwining
between the generators. An analogous result for Markov processes with
discrete state-spaces was proved by Fill~\cite[Lemma 3]{fill92strong-stationary-duality}.
Although we use the following theorem only when $P_{t}$ and $Q_{t}$
are Feller semigroups and $K$ is a probability kernel, we are able
to prove it more generally.
\begin{thm}
\label{thm:Semigroup-intertwining}Let $L_{1},L_{2}$ be Banach spaces.
Let $P_{t}$ and $Q_{t}$ be strongly continuous contraction semigroups
defined on $L_{1},L_{2}$ and let $G$ and $H$ be their generators.
Let $K:L_{2}\rightarrow L_{1}$ be a continuous linear operator. Then
the following are equivalent:
\begin{enumerate}
\item \label{enu:SI-semigroup}For all $t\geq0$,
\begin{equation}
P_{t}K=KQ_{t}\label{eq:semigroup-intertwining}
\end{equation}
on $L_{2}$,
\item \label{enu:SI-generator}$K$ maps $\Dom(H)$ into $\Dom(G)$ and
\begin{equation}
GK=KH\label{eq:generator-intertwining}
\end{equation}
on $\Dom(H)$,
\item \label{enu:SI-core}There exists a core $D$ of $H$ (i.\,e.\ $D$
is a dense subspace of $\Dom(H)$ such that the closure of the restriction
of $H$ to $D$ is $H$) such that $K$ maps $D$ into $\Dom(G)$
and (\ref{eq:generator-intertwining}) holds on $D$.
\end{enumerate}
\end{thm}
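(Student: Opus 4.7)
The plan is to prove the three statements are equivalent via the cyclic chain (\ref{enu:SI-semigroup}) $\Rightarrow$ (\ref{enu:SI-generator}) $\Rightarrow$ (\ref{enu:SI-core}) $\Rightarrow$ (\ref{enu:SI-semigroup}). The direction (\ref{enu:SI-semigroup}) $\Rightarrow$ (\ref{enu:SI-generator}) I would get straight from the definition of the generator: if $f\in\Dom(H)$, then $h^{-1}(Q_h f-f)\to Hf$ in $L_2$, so by continuity of $K$
\[
\frac{P_h K f-Kf}{h}=\frac{KQ_h f-Kf}{h}=K\!\left(\frac{Q_h f-f}{h}\right)\longrightarrow KHf\quad\text{in }L_1,
\]
which means $Kf\in\Dom(G)$ and $GKf=KHf$. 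The implication (\ref{enu:SI-generator}) $\Rightarrow$ (\ref{enu:SI-core}) is trivial by taking $D=\Dom(H)$.

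For (\ref{enu:SI-core}) $\Rightarrow$ (\ref{enu:SI-generator}) I would extend the intertwining from $D$ to all of $\Dom(H)$ using closedness of $G$ together with the core property of $D$: for $f\in\Dom(H)$ pick $f_n\in D$ with $f_n\to f$ and $Hf_n\to Hf$ in $L_2$. Continuity of $K$ gives $Kf_n\to Kf$ and $GKf_n=KHf_n\to KHf$ in $L_1$, so closedness of $G$ yields $Kf\in\Dom(G)$ with $GKf=KHf$. This is routine but relies crucially on the fact that generators of strongly continuous contraction semigroups are closed.

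The main obstacle is (\ref{enu:SI-generator}) $\Rightarrow$ (\ref{enu:SI-semigroup}), where one must pass from an infinitesimal identity to a semigroup identity. I would fix $f\in\Dom(H)$ and $t>0$ and show that the curve $\varphi(s):=P_{t-s}KQ_s f$ on $[0,t]$ is constant; the desired identity $KQ_t f=\varphi(t)=\varphi(0)=P_t K f$ would then follow. Because $Q_s f\in\Dom(H)$ for every $s$, hypothesis (\ref{enu:SI-generator}) gives $KQ_s f\in\Dom(G)$ with $GKQ_s f=KHQ_s f$, and continuity of $K$ applied to the identity $\tfrac{d}{ds}Q_s f=HQ_s f$ yields $\tfrac{d}{ds}KQ_s f=KHQ_s f$ in $L_1$. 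A careful product-rule computation, splitting
\[
\frac{\varphi(s+h)-\varphi(s)}{h}=P_{t-s-h}\,\frac{KQ_{s+h}f-KQ_s f}{h}+\frac{P_{t-s-h}-P_{t-s}}{h}\,KQ_s f,
\]
and using strong continuity of $P_\bullet$, uniform boundedness of $\|P_r\|$ by $1$, and $KQ_s f\in\Dom(G)$, gives
\[
\varphi'(s)=P_{t-s}KHQ_s f-GP_{t-s}KQ_s f=P_{t-s}KHQ_s f-P_{t-s}GKQ_s f=0.
\]
Hence $\varphi$ is constant, which closes the cycle. The trickiest technical point will be justifying this product rule cleanly; an alternative would be to verify $\varphi$ is weakly differentiable with zero derivative and appeal to the fundamental theorem of calculus for Bochner integrals, but the direct argument above seems shortest.
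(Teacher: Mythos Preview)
Your proof is correct and matches the paper's argument closely in three of the four implications: $(\ref{enu:SI-semigroup})\Rightarrow(\ref{enu:SI-generator})$, $(\ref{enu:SI-generator})\Rightarrow(\ref{enu:SI-core})$, and $(\ref{enu:SI-core})\Rightarrow(\ref{enu:SI-generator})$ are identical to what the paper does.

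The one genuine difference is in $(\ref{enu:SI-generator})\Rightarrow(\ref{enu:SI-semigroup})$. The paper sets $u(t):=KQ_t f$ for $f\in\Dom(H)$, checks that $u(t)\in\Dom(G)$ with $u'(t)=Gu(t)$ and that $t\mapsto Gu(t)=KQ_tHf$ is continuous, and then invokes a uniqueness result for the abstract Cauchy problem (Proposition~1.3.4 in Ethier and Kurtz) to conclude $u(t)=P_t u(0)=P_tKf$. You instead use the interpolation curve $\varphi(s)=P_{t-s}KQ_s f$ and show $\varphi'\equiv 0$ directly. Both arguments are standard; yours is more self-contained (no external uniqueness lemma needed), whereas the paper's is shorter once one is willing to quote the result from Ethier--Kurtz. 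In fact the proof of that uniqueness proposition is essentially your interpolation computation, so the two approaches are close cousins. One minor point you should make explicit in your write-up: after establishing $P_tKf=KQ_tf$ for $f\in\Dom(H)$, you still need to pass to general $f\in L_2$ by density of $\Dom(H)$ and boundedness of $P_tK$ and $KQ_t$; the paper does spell this out.
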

\begin{proof}
To prove $\eqref{enu:SI-semigroup}\Rightarrow\eqref{enu:SI-generator}$,
fix $f\in\Dom(H)$. Then $\frac{1}{t}\left(Q_{t}f-f\right)$ converges
to $Hf$, so by the continuity of $K$, $\frac{1}{t}\left(KQ_{t}f-Kf\right)$
converges to $KHf$. By~(\ref{eq:semigroup-intertwining}), $\frac{1}{t}\left(P_{t}Kf-Kf\right)$
is also convergent, so $Kf$ is in $\Dom(G)$ and $GKf=KHf$.

In order to prove $\eqref{enu:SI-generator}\Rightarrow\eqref{enu:SI-semigroup}$,
fix $f\in\Dom(H)$ and define $u(t)=KQ_{t}f$. Since $Q_{t}f$ is
in $\Dom(H)$ by~\cite[Proposition 1.1.5]{EthierKurz}, $u(t)\in\Dom(G)$
for all $t\geq0$. By the continuity of $K$
\[
\frac{\d}{\d t}u(t)=K\frac{\d}{\d t}Q_{t}f=KHQ_{t}f=GKQ_{t}f=Gu(t).
\]
Since
\[
\frac{\d}{\d t}u(t)=KQ_{t}Hf,
\]
$Gu(t)=\frac{\d}{\d t}u(t)$ is continuous as a function of $t$.
By Proposition~1.3.4 in Ethier and Kurtz~\cite{EthierKurz}, $u(t)=P_{t}u(0)=P_{t}Kf$
which proves that (\ref{eq:semigroup-intertwining}) holds on $\Dom(H)$.
Since all operators involved in~(\ref{eq:semigroup-intertwining})
are continuous, the assertion now follows from the density of $\Dom(H)$
in $L_{2}$.

The implication $\eqref{enu:SI-generator}\Rightarrow\eqref{enu:SI-core}$
is trivial by taking $D=\Dom(H)$. To prove the converse, let $f$
be in $\Dom(H)$. Then there exist $f_{n}\in D$ such that $f_{n}\rightarrow f$
and $Hf_{n}\rightarrow Hf$. Since $K$ is continuous, $Kf_{n}\rightarrow Kf$
and $GKf_{n}=KHf_{n}\rightarrow KHf$, where we have used (\ref{eq:generator-intertwining})
for $f_{n}$. Since $G$ is a closed operator, $Kf$ is in $\Dom(G)$
and $GKf=KHf$.
\end{proof}
Theorem~\ref{thm:Semigroup-intertwining} shows that it suffices
to prove~(\ref{eq:generator-intertwining}) on a core of $H$. In
the Appendix it is shown that 

\begin{equation}
D_{H}=\left\{ f\in\C\left(\bar{\N}\right);\,\exists y_{0}\in\N\,\mathrm{s.\,t.}\,\forall y>y_{0}\,f(y)=f(\infty)\right\} \label{eq:D_H}
\end{equation}
is a core of $H$. The following theorem verifies condition~\ref{enu:SI-core}
of Theorem~\ref{thm:Semigroup-intertwining}.
\begin{thm}
\label{thm:GK=00003DKH}$K$ maps $D_{H}$ into $\Dom(G)$ and
\[
GK=KH
\]
on $D_{H}$.\end{thm}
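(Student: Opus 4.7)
The plan is to exploit the polynomial structure of $K$ by rewriting $Kf$, for $f\in D_H$, as a finite polynomial in $x^{2}$, then differentiate and match the resulting expression with $KHf$ using the key algebraic identity $2(z+1)(2z+1)=\lambda_z$.

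First, I would fix $f\in D_H$ and let $y_0$ be such that $f(y)=f(\infty)$ for all $y>y_0$. For $x<1$ the definition of $K$ gives
\[
Kf(x)=(1-x^{2})\sum_{y=0}^{\infty}x^{2y}f(y),
\]
and I would perform an Abel-type summation by parts (splitting the geometric weight $(1-x^{2})x^{2y}=x^{2y}-x^{2y+2}$ and reindexing) to obtain the telescoping identity
\[
Kf(x)=f(0)+\sum_{y=1}^{\infty}x^{2y}\bigl(f(y)-f(y-1)\bigr).
\]
Because $f(y)-f(y-1)=0$ for $y>y_0+1$, the sum is finite, so $Kf$ is a polynomial in $x^{2}$. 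In particular $Kf\in\C^{2}[0,1]$, its derivative at $0$ vanishes (every surviving term carries a factor $x^{2y-1}$ with $y\geq1$), and the value at $x=1$ telescopes to $f(\infty)$, matching $K(1,\cdot)=\indicator_{\{\infty\}}$. Hence $Kf\in\Dom(G)$.

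Next I would compute $GKf$ by differentiating the polynomial expression twice and multiplying by $1-x^{2}$:
\[
GKf(x)=(1-x^{2})\sum_{y=1}^{\infty}2y(2y-1)x^{2y-2}\bigl(f(y)-f(y-1)\bigr).
\]
Shifting the index by $z=y-1$ and noting the crucial algebraic coincidence $2(z+1)(2z+1)=(2z+1)(2z+2)=\lambda_{z}$, this becomes
\[
GKf(x)=\sum_{z=0}^{\infty}(1-x^{2})x^{2z}\lambda_{z}\bigl(f(z+1)-f(z)\bigr)=\sum_{z=0}^{\infty}K(x,z)Hf(z)=KHf(x),
\]
where the last equality uses that $Hf(\infty)=0$ and that $K(x,\infty)=0$ for $x<1$. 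Finally, at $x=1$ both sides vanish: $GKf(1)=0$ because of the prefactor $1-x^{2}$, and $KHf(1)=Hf(\infty)=0$.

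The main observation making the proof work is the identity $2(z+1)(2z+1)=\lambda_{z}$, which is precisely what forces the choice of the birth rates in the definition of $H$; once this is in hand the computation is essentially mechanical. The only technical point requiring care is the passage between the defining series $\sum(1-x^{2})x^{2y}f(y)$ and the telescoped polynomial form — this is what ensures both membership in $\Dom(G)$ (through the boundary condition $f'(0)=0$) and the matching of boundary values at $x=1$, and it is where we use that $f\in D_H$ rather than an arbitrary element of $\C(\Nbar)$.
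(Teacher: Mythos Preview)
Your proof is correct and follows essentially the same route as the paper: rewrite $Kf$ via the telescoping identity $Kf(x)=f(0)+\sum_{y\ge1}x^{2y}(f(y)-f(y-1))$ to exhibit it as an even polynomial, verify the boundary condition at $0$ and the value at $1$, then differentiate and reindex using $2(z+1)(2z+1)=\lambda_z$ to identify $GKf$ with $KHf$. The only differences are cosmetic (you keep infinite sums with vanishing tails where the paper truncates explicitly at $y_0$, and you spell out the key identity $2(z+1)(2z+1)=\lambda_z$ that the paper leaves implicit).
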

\begin{proof}
Fix $f$ in $D$ and let $y_{0}\in\N$ be such that $f(y)=f(\infty)$
for all $y\geq y_{0}$. Then for $x\in[0,1)$,
\begin{eqnarray}
Kf(x) & = & \left(1-x^{2}\right)\sum_{y=0}^{y_{0}-1}x^{2y}f(y)+x^{2y_{0}}f\left(y_{0}\right)\nonumber \\
 & = & f(0)+\sum_{y=1}^{y_{0}}x^{2y}\left(f(y)-f(y-1)\right).\label{eq:Kf}
\end{eqnarray}
As $x$ approaches $1$, $Kf(x)$ approaches $f\left(y_{0}\right)$.
Now
\[
Kf\left(1\right)=f(\infty)=f\left(y_{0}\right).
\]
Hence (\ref{eq:Kf}) holds also for $x=1$, and therefore $Kf$ is
in $\C^{\infty}[0,1]\subseteq\C^{2}(G)$. Moreover
\[
\frac{\partial}{\partial x}Kf(x)=\sum_{y=1}^{y_{0}}2yx^{2y-1}\left(f(y)-f(y-1)\right),
\]
hence
\[
\frac{\partial}{\partial x}Kf(0)=0.
\]
We have shown that $Kf$ is in $\Dom(G)$.

From~(\ref{eq:Kf}) we have that for $x\in[0,1]$,
\begin{eqnarray*}
GKf(x) & = & \left(1-x^{2}\right)\sum_{y=1}^{y_{0}}2y(2y-1)x^{2y-2}\left(f(y+1)-f(y)\right)\\
 & = & \left(1-x^{2}\right)\sum_{y=0}^{y_{0}-1}\lambda_{y}x^{2y}\left(f(y+1)-f(y)\right).
\end{eqnarray*}
Now for $y<y_{0}$
\[
Hf(y)=\lambda_{y}\left(f(y+1)-f(y)\right),
\]
and for $y\geq y_{0}$,
\[
Hf(y)=0,
\]
hence for $x\in[-1,1]$,
\[
KHf(x)=\left(1-x^{2}\right)\sum_{y=0}^{y_{0}-1}\lambda_{y}x^{2y}\left(f(y+1)-f(y)\right).
\]

\end{proof}

\begin{proof}[Proof of Theorem~\ref{thm:PK=00003DKQ}.]
We use Theorem~\ref{thm:Semigroup-intertwining}. In the present
context, $L_{1}=\C[0,1]$ and $L_{2}=\C\left(\bar{\N}\right)$. Thus
we need to show that $K$ maps $\C\left(\Nbar\right)$ to $\C[0,1]$.
This is equivalent to saying that the measures $K(x,\cdot)$ are continuous
in $x$ with respect to the weak convergence. But this is easy to
prove, since $K(x,\cdot)$ is geometric distribution with success
parameter $1-x^{2}$ if $x<1$, and it is the degenerate distribution
$\delta_{\infty}$ if $x=1$. Theorem~\ref{thm:GK=00003DKH} verifies
condition~\ref{enu:SI-core} of Theorem~\ref{thm:Semigroup-intertwining}.
Theorem~\ref{thm:Semigroup-intertwining} shows that this is equivalent
to condition~\ref{enu:SI-semigroup}, and this is what we had to
prove.
\end{proof}

\subsection{Coupling}

In order to find a coupling of Theorem~\ref{thm:coupling}, recall
that $\S$ is the one-point compactification of $[0,1]\times\N$,
where $\left(1,\infty\right)$ denotes the point at infinity. It is
easy to see that $f:\S\rightarrow\R$ is continuous if and only if
$f\left(\cdot,y\right)$ is continuous for all $y\in\N$ and $f(x,y)\rightarrow f(1,\infty)$
as $y\rightarrow\infty$, uniformly in $x$. It is also easy to see
that
\[
\left\{ f\in\C(\S);f(x,y)=f(1,\infty)\,\mathrm{for\,all}\,x\in[0,1]\,\mathrm{and}\,y>y_{0}\,\mathrm{for\,some}\,y_{0}\in\N\right\} 
\]
is dense in $\C(\S)$. Since even polynomials are dense in $\C[0,1]$
by the Stone-Weierstrass theorem, it follows that
\begin{equation}
\begin{aligned}\Dom\left(\boldsymbol{G}\right)= & \left\{ f\in\C(\S);\exists y_{0}\in\N\,\mathrm{s.\,t.}\,f(x,y)=f(1,\infty)\,\mathrm{for\,all}\,x\in[0,1]\,\mathrm{and}\,y>y_{0},\right.\\
 & \left.f(\cdot,y)\,\mathrm{is\,an\,even\,polynomial\,for\,all}\,y\leq y_{0}\right\} 
\end{aligned}
\label{eq:domain}
\end{equation}
is dense in $\C(\S)$. 

We now define an operator $\boldsymbol{G}$ with domain $\Dom(\boldsymbol{G})$
which we later prove generates a Feller process satisfying Theorem~\ref{thm:coupling}.
For the motivation of this definition, see section~\ref{sub:Generator-derivation}.
For $f\in\Dom(\boldsymbol{G})$ and $(x,y)\in(0,1)\times\N$ define
\begin{equation}
\boldsymbol{G}f(x,y)=\left(Hf\left(x,\cdot\right)\right)(y)+\frac{\left(Gf\left(\cdot,y\right)K\left(\cdot,y\right)\right)(x)-f(x,y)\left(GK\left(\cdot,y\right)\right)(x)}{K(x,y)}.\label{eq:coupled-generator-general}
\end{equation}
Here $\left(Gf\left(\cdot,y\right)K\left(\cdot,y\right)\right)$ denotes
the application of the operator $G$ to the product of $f\left(\cdot,y\right)$
and $K(\cdot,y)$, and $\left(GK\left(\cdot,y\right)\right)$ is the
application of the operator $G$ to $K(\cdot,y)$. In both cases,
$y$ is held fixed, so $f(\cdot,y)$ and $K(\cdot,y)$ are viewed
as functions of $x$ only. $\left(Hf\left(x,\cdot\right)\right)$
is interpreted similarly, but here $x$ is held fixed. Note that $f(\cdot,y)K(\cdot,y)$
and $K(\cdot,y)$ are even polynomials, hence they are in $\Dom(G)$.
Moreover, $f(x,\cdot)$ is in $D_{H}$ of~(\ref{eq:D_H}), which
is a core of $H$ as shown in the Appendix, hence $f(x,\cdot)$ is
in $\Dom(H)$. Finally, $K(x,y)>0$ since $x$ is in $(0,1)$ and
$y<\infty$. Therefore, all the expressions in~(\ref{eq:coupled-generator-general})
are well defined. After plugging in the definitions of $H$ and $G$,
we can get an explicit formula for $\boldsymbol{G}$.
\begin{lem}
\label{lem:coupled-generator-explicit}Let $f$ be in $\Dom(\boldsymbol{G})$
and $(x,y)\in(0,1)\times\N$. Let $\boldsymbol{G}f$ be defined by~(\ref{eq:coupled-generator-general}).
Then
\begin{equation}
\boldsymbol{G}f(x,y)=\lambda_{y}\left(f(x,y+1)-f(x,y)\right)+\left(1-x^{2}\right)\frac{\partial^{2}}{\partial x^{2}}f(x,y)+4\left[\frac{y}{x}-(y+1)x\right]\frac{\partial}{\partial x}f(x,y).\label{eq:coupled-generator-explicit}
\end{equation}
\end{lem}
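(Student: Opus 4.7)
The plan is to expand $\boldsymbol{G}f(x,y)$ term by term directly from the definition~(\ref{eq:coupled-generator-general}). The pure-birth contribution is immediate: since $f(x,\cdot)\in D_H$, the definition of $H$ gives $(Hf(x,\cdot))(y)=\lambda_y(f(x,y+1)-f(x,y))$. All the real work sits in simplifying the second summand, which I will handle via a product-rule identity that is particularly clean because $G$ has no zeroth- or first-order coefficients.

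Abbreviate $g(x):=f(x,y)$ and $k(x):=K(x,y)=(1-x^2)x^{2y}$, with $y\in\N$ held fixed; both are even polynomials in $x$, hence in $\Dom(G)$. Applying Leibniz, $(gk)''=g''k+2g'k'+gk''$, so
\[
G(gk)-g\,Gk=(1-x^2)\bigl[(gk)''-g\,k''\bigr]=(1-x^2)g''(x)k(x)+2(1-x^2)g'(x)k'(x).
\]
Dividing through by $k(x)=K(x,y)>0$ (valid since $x\in(0,1)$ and $y<\infty$) gives
\[
\frac{\bigl(Gf(\cdot,y)K(\cdot,y)\bigr)(x)-f(x,y)\bigl(GK(\cdot,y)\bigr)(x)}{K(x,y)}=(1-x^2)\frac{\partial^2}{\partial x^2}f(x,y)+2(1-x^2)\frac{k'(x)}{k(x)}\,\frac{\partial}{\partial x}f(x,y).
\]

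The remaining step is a short logarithmic-derivative computation. From $\log k(x)=\log(1-x^2)+2y\log x$ one reads off
\[
\frac{k'(x)}{k(x)}=-\frac{2x}{1-x^2}+\frac{2y}{x},
\]
so that
\[
2(1-x^2)\frac{k'(x)}{k(x)}=-4x+\frac{4y(1-x^2)}{x}=\frac{4y}{x}-4(y+1)x.
\]
Substituting this back into the previous display and adding the $H$-contribution $\lambda_y(f(x,y+1)-f(x,y))$ yields exactly~(\ref{eq:coupled-generator-explicit}). There is no genuine obstacle; the only point requiring care is the product-rule step, which works so cleanly precisely because $G$ is a pure second-order operator, so that $G(gk)-g\,Gk$ collapses to the single cross-term $2(1-x^2)g'k'$ from which the drift coefficient is then extracted by a one-line logarithmic derivative.
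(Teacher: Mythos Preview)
Your proof is correct and follows essentially the same route as the paper: both apply the Leibniz rule to $G(fK)$, cancel the $f\,GK$ term, divide by $K$, and reduce the resulting drift coefficient $2(1-x^2)K'/K$ to $\frac{4y}{x}-4(y+1)x$. The only cosmetic difference is that you compute $K'/K$ via the logarithmic derivative of $K(x,y)=(1-x^2)x^{2y}$, whereas the paper differentiates $K$ directly and then divides; the two computations are equivalent.
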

\begin{proof}
Observe that for $(x,y)\in(0,1)\times\N$,
\begin{eqnarray*}
\left(Gf\left(\cdot,y\right)K\left(\cdot,y\right)\right)(x) & = & \left(1-x^{2}\right)\left(\frac{\partial^{2}}{\partial x^{2}}f(x,y)\right)K(x,y)\\
 &  & +2\left(1-x^{2}\right)\frac{\partial}{\partial x}f(x,y)\frac{\partial}{\partial x}K(x,y)\\
 &  & +\left(1-x^{2}\right)f(x,y)\frac{\partial^{2}}{\partial x^{2}}K(x,y).
\end{eqnarray*}
Hence
\begin{multline*}
\frac{\left(Gf\left(\cdot,y\right)K\left(\cdot,y\right)\right)(x)-f(x,y)\left(GK\left(\cdot,y\right)\right)(x)}{K(x,k)}\\
\begin{aligned}= & \left(1-x^{2}\right)\frac{\partial^{2}}{\partial x^{2}}f(x,y)+2\left(1-x^{2}\right)\frac{\frac{\partial}{\partial x}K(x,y)}{K(x,y)}\frac{\partial}{\partial x}f(x,y).\end{aligned}
\end{multline*}
Noting that
\begin{eqnarray*}
2\left(1-x^{2}\right)\frac{\frac{\partial}{\partial x}K(x,y)}{K(x,y)} & = & 2\frac{2yx^{2y-1}-(2y+2)x^{2y+1}}{y^{2k}}\\
 & = & \frac{4y}{x}-4(y+1)x,
\end{eqnarray*}
we get
\begin{multline}
\frac{\left(Gf\left(\cdot,y\right)K\left(\cdot,y\right)\right)(x)-f(x,y)\left(GK\left(\cdot,y\right)\right)(x)}{K(x,k)}\\
=\left(1-x^{2}\right)\frac{\partial^{2}}{\partial x^{2}}f(x,y)+4\left[\frac{y}{x}-(y+1)x\right]\frac{\partial}{\partial x}f(x,y).\label{eq:coupled-generator-part}
\end{multline}
Plugging~(\ref{eq:coupled-generator-part}) and the definition of
$H$ into~(\ref{eq:coupled-generator-general}), we get~(\ref{eq:coupled-generator-explicit}).
\end{proof}
Formula~(\ref{eq:coupled-generator-explicit}) is well defined even
for $x=1$. Moreover, since $f(x,y)$ is an even polynomial in $x$,
it follows that
\[
\lim_{x\downarrow0}\frac{y}{x}\frac{\partial}{\partial x}f(x,y)
\]
exists, hence we can define $\boldsymbol{G}f$ on $[0,1]\times\N$
by taking the limit. Observe that for $y>y_{0}$ (where $y_{0}$ is
as in~(\ref{eq:domain})), $\boldsymbol{G}f(x,y)=0$. Therefore,
if we define $\boldsymbol{G}f(1,\infty)=0$, then $\boldsymbol{G}f$
is in $\Dom\left(\boldsymbol{G}\right)\subseteq\C(\S)$ and we can
view $\boldsymbol{G}:\Dom(\boldsymbol{G})\rightarrow\C(\S)$ as a
linear operator.
\begin{thm}
\label{thm:generator}Operator $\boldsymbol{G}$ is closable and its
closure generates a Feller semigroup.
\end{thm}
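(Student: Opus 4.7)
The plan is to verify the hypotheses of the Hille--Yosida--Ray theorem for Feller semigroups (e.\,g.\ Ethier--Kurtz~\cite{EthierKurz}, Chapter~4). Since the discussion preceding the statement already shows that $\Dom(\boldsymbol{G})$ is dense in $\C(\S)$ and that $\boldsymbol{G}$ maps $\Dom(\boldsymbol{G})$ into $\C(\S)$, only two conditions remain: that $\boldsymbol{G}$ satisfies the positive maximum principle, and that the range of $\lambda-\boldsymbol{G}$ is dense in $\C(\S)$ for some $\lambda>0$.

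For the positive maximum principle I would fix $f\in\Dom(\boldsymbol{G})$ attaining a non-negative maximum at some $(x_{0},y_{0})\in\S$ and split into cases using the explicit formula~(\ref{eq:coupled-generator-explicit}). The point $(1,\infty)$ is trivial since $\boldsymbol{G}f$ vanishes there. For $y_{0}$ finite and $x_{0}\in(0,1)$, the interior conditions $\partial_{x}f(x_{0},y_{0})=0$, $\partial_{x}^{2}f(x_{0},y_{0})\leq 0$, and $f(x_{0},y_{0}+1)\leq f(x_{0},y_{0})$ make every term in~(\ref{eq:coupled-generator-explicit}) non-positive. At $x_{0}=1$ one uses $\partial_{x}f(1,y_{0})\geq 0$ together with the vanishing of $1-x^{2}$. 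The delicate case is $x_{0}=0$, where the drift coefficient $4y/x$ is unbounded; here I would exploit the fact that $f(\cdot,y_{0})$ is an even polynomial to conclude $\partial_{x}f(0,y_{0})=0$ and $\partial_{x}f(x,y_{0})/x\to\partial_{x}^{2}f(0,y_{0})\leq 0$ as $x\downarrow0$, whence
\[
\boldsymbol{G}f(0,y_{0})=\lambda_{y_{0}}\bigl(f(0,y_{0}+1)-f(0,y_{0})\bigr)+(1+4y_{0})\,\partial_{x}^{2}f(0,y_{0})\leq 0.
\]
This step is where I expect the most care is needed, and it is precisely where the restriction of $\Dom(\boldsymbol{G})$ to even polynomials in~(\ref{eq:domain}) plays its essential role.

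For the range condition I would prove the stronger statement $(\lambda-\boldsymbol{G})\Dom(\boldsymbol{G})=\Dom(\boldsymbol{G})$, which suffices by the density of $\Dom(\boldsymbol{G})$. Given $g\in\Dom(\boldsymbol{G})$ with cutoff $y_{0}$, set $f(1,\infty):=g(1,\infty)/\lambda$ and $f(x,y):=f(1,\infty)$ for $y>y_{0}$; then solve backwards for $y=y_{0},y_{0}-1,\ldots,0$ the equation
\[
(\lambda+\lambda_{y}-D_{y})f(\cdot,y)=g(\cdot,y)+\lambda_{y}f(\cdot,y+1),
\]
where $D_{y}:=(1-x^{2})\partial_{x}^{2}+4[y/x-(y+1)x]\partial_{x}$ denotes the diffusion part of $\boldsymbol{G}$. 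A direct computation yields
\[
D_{y}x^{2k}=2k(2k-1+4y)\,x^{2k-2}-2k(2k+3+4y)\,x^{2k},
\]
so in the basis $\{1,x^{2},x^{4},\ldots\}$ the operator $D_{y}$ is upper triangular with non-positive diagonal. Hence $\lambda+\lambda_{y}-D_{y}$ is invertible on the finite-dimensional space of even polynomials of any fixed degree, producing an even polynomial $f(\cdot,y)$. Reassembling the $f(\cdot,y)$ yields an element of $\Dom(\boldsymbol{G})$ satisfying $(\lambda-\boldsymbol{G})f=g$, which completes the verification.
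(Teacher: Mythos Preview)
Your proof is correct; the treatment of the positive maximum principle is essentially identical to the paper's, including the delicate case $x_{0}=0$ where evenness of $f(\cdot,y_{0})$ forces the second-order coefficient (hence $\partial_{x}^{2}f(0,y_{0})$) to be non-positive.

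The only genuine difference is in the range condition. The paper does not solve $(\lambda-\boldsymbol{G})f=g$ directly; instead it invokes Proposition~\ref{prop:Hille-Yosida-subspaces}, observing that the finite-dimensional spaces
\[
L_{n}=\bigl\{f\in\C(\S):\ f(\cdot,y)\ \text{even polynomial of degree}\leq 2n\ \text{for}\ y\leq n,\ f(\cdot,y)\equiv f(1,\infty)\ \text{for}\ y>n\bigr\}
\]
are $\boldsymbol{G}$-invariant and have dense union. Dissipativity then makes $\lambda-\boldsymbol{G}$ injective, hence bijective, on each $L_{n}$, yielding dense range for free. Your explicit backward induction in $y$ together with the upper-triangularity of $D_{y}$ on $\{1,x^{2},\dots,x^{2n}\}$ is precisely the computation that underlies the invariance $\boldsymbol{G}:L_{n}\to L_{n}$; so the two arguments are really the same observation unpacked to different depths. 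The paper's packaging is slightly shorter, while yours has the advantage of exhibiting the resolvent concretely and not needing dissipativity for the range step.
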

In order to prove Theorem~\ref{thm:generator}, we use the following
corollary to the Hille-Yosida theorem.
\begin{prop}
\label{prop:Hille-Yosida-subspaces}Let $E$ be a compact metric space,
$\Dom(G)$ a subspace of $\C(E)$, and $G:\Dom(G)\rightarrow\C(E)$
a linear operator. Suppose that $1$ is in $\Dom(G)$ and $G1=0$,
$G$ satisfies the positive maximum principle, and there exist a sequence
$\left(L_{n}\right)_{n\in\N}$ of finite-dimensional subspaces of
$\Dom(G)$ such that $\bigcup_{n\in\N}L_{n}$ is dense in $\C(E)$
and $G:L_{n}\rightarrow L_{n}$. Then $G$ is closable and its closure
generates a Feller semigroup.\end{prop}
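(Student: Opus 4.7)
The plan is to reduce to the Hille-Yosida theorem for Feller semigroups on $\C(E)$ (Ethier and Kurtz~\cite{EthierKurz}), which says that a densely defined linear operator $G:\Dom(G)\to\C(E)$ satisfying the positive maximum principle and for which $\Range(\lambda-G)$ is dense in $\C(E)$ for some $\lambda>0$ is closable, with closure generating a strongly continuous positive contraction semigroup. The extra hypothesis $1\in\Dom(G)$ with $G1=0$ then promotes this semigroup to a conservative (Feller) one, because $\overline{G}\,1=0$ forces $P_t 1=1$ for all $t\geq 0$. Density of $\Dom(G)$ follows at once from the assumption that $\bigcup_n L_n\subseteq\Dom(G)$ is dense in $\C(E)$, and the positive maximum principle is hypothesized directly, so everything rests on the range condition.

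To establish density of $\Range(\lambda-G)$ for a fixed $\lambda>0$, the idea is to show that $L_n\subseteq\Range(\lambda-G)$ for every $n\in\N$; density then follows since $\bigcup_n L_n$ is dense in $\C(E)$. The finite-dimensional invariance $G(L_n)\subseteq L_n$ turns $(\lambda-G)|_{L_n}$ into a linear endomorphism of a finite-dimensional space, so surjectivity onto $L_n$ reduces to injectivity on $L_n$. For injectivity, I would argue as follows: suppose $f\in L_n$ satisfies $Gf=\lambda f$, and let $x_0\in E$ maximise $f$. If $f(x_0)\geq 0$, the positive maximum principle gives $Gf(x_0)\leq 0$, hence $\lambda f(x_0)\leq 0$, which combined with $\lambda>0$ yields $f(x_0)=0$ and thus $f\leq 0$ on $E$. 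Applying the same reasoning to $-f$ (which also satisfies $G(-f)=\lambda(-f)$) gives $-f\leq 0$, so $f\equiv 0$.

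The substantive step is the injectivity argument via the positive maximum principle; once one spots the finite-dimensional reduction, it is very short, and this is not so much an obstacle as the crux of the proof. The only real care needed is to use a version of Hille-Yosida formulated on $\C(E)$ for compact $E$ (as opposed to $\Cnull(E)$ on a locally compact space), in which conservativity is obtained from $G1=0$ rather than from vanishing at infinity. I expect no genuine obstacles beyond this bookkeeping and the mild observation that the positive maximum principle automatically supplies the dissipative estimate $\|(\lambda-G)f\|\geq\lambda\|f\|$ needed to identify the closure as a contraction generator.
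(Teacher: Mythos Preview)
Your proposal is correct and follows the same route as the paper: dissipativity from the positive maximum principle, a Hille--Yosida--type theorem from Ethier and Kurtz, and conservativity from $G1=0$. The paper's proof simply cites Lemma~4.2.1 and Proposition~1.3.5 of \cite{EthierKurz} without spelling out the dense-range step, whereas you make explicit how the finite-dimensional invariance $G(L_n)\subseteq L_n$ together with injectivity of $\lambda-G$ on $L_n$ (via the positive maximum principle) yields $L_n\subseteq\Range(\lambda-G)$; this is the place where the $L_n$ hypothesis is actually used, and it is worth having it written out.
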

\begin{proof}
Lemma~4.2.1 in Ethier and Kurtz~\cite{EthierKurz} shows that $G$
is dissipative, and Proposition~1.3.5 in~\cite{EthierKurz} then
proves that $G$ is closable and its closure generates a strongly
continuous contraction semigroup. Finally, the fact that $G1=0$ proves
that $G$ is conservative.
\end{proof}

\begin{proof}[Proof of Theorem~\ref{thm:generator}]
Let us first prove that $\boldsymbol{G}$ satisfies the positive
maximum principle. Let $f\in D$ and $\left(x_{0},y_{0}\right)\in\S$
be such that
\[
\sup_{(x,y)\in\S}f(x,y)=f\left(x_{0},y_{0}\right)\geq0.
\]
First if $y_{0}=\infty$, then $\boldsymbol{G}f\left(\boldsymbol{z}_{0}\right)=0$
by definition. Second, let us assume that $\left(x_{0},y_{0}\right)\in[0,1]\times\N$.
Then we have $f\left(x_{0},y_{0}+1\right)-f\left(x_{0},y_{0}\right)\leq0$.
If $x_{0}\in(0,1)$, then $\frac{\partial}{\partial x}f\left(x_{0},y_{0}\right)=0$
and $\frac{\partial^{2}}{\partial x^{2}}f\left(x_{0},y_{0}\right)\leq0$,
hence $\boldsymbol{G}f\left(x_{0},y_{0}\right)\leq0$. If $x_{0}=1$,
then
\[
4\left[\frac{y_{0}}{x_{0}}-(y_{0}+1)x_{0}\right]\frac{\partial}{\partial x}f(x_{0},y_{0})\leq0
\]
and
\[
\left(1-x_{0}^{2}\right)\frac{\partial^{2}}{\partial x^{2}}f\left(x_{0},y_{0}\right)=0,
\]
so $\boldsymbol{G}f\left(x_{0},y_{0}\right)\leq0$. And if $x_{0}=0$,
then the second-order term of the polynomial $f\left(\cdot,y_{0}\right)$
must be non-positive, for otherwise $\left(0,y_{0}\right)$ could
not be a point of maximum. Hence
\[
\lim_{x\downarrow0}\frac{y_{0}}{x}\frac{\partial}{\partial x}f\left(x,y_{0}\right)\leq0
\]
and
\[
\left(1-x_{0}^{2}\right)\frac{\partial^{2}}{\partial x^{2}}f\left(x_{0},y_{0}\right)\leq0,
\]
so we again get that $\boldsymbol{G}f\left(x_{0},y_{0}\right)\leq0$.
We have shown that $\boldsymbol{G}$ satisfies the positive maximum
principle.

Define
\begin{eqnarray*}
L_{n} & = & \left\{ f\in\C(\S);\,f(\cdot,y)\,\mathrm{is\,an\,even\,polynomial\,of\,degree\,at\,most}\,2n\,\mathrm{for\,all}\,y\leq n,\right.\\
 &  & \left.f(\cdot,y)=f(1,\infty)\,\mathrm{for\,all}\,y>n\right\} .
\end{eqnarray*}
It is easy to see that $\boldsymbol{G}:L_{n}\rightarrow L_{n}$ and
$\bigcup_{n\in\N}L_{n}=\Dom(\boldsymbol{G})$ is dense in $\C[0,1]$.
Finally, $\boldsymbol{G}1=0$, so by Proposition~\ref{prop:Hille-Yosida-subspaces},
$\boldsymbol{G}$ is closable and its closure generates a Feller semigroup.\end{proof}
\begin{rem}
\label{rem:WF-fail}The proof of Theorem~\ref{thm:generator} is
the only place where our argument fails for the Wright-Fisher diffusion
without reflection at zero. Indeed, we could take $P_{t}$ to be the
semigroup of the diffusion on the whole interval $[-1,1]$, that is,
$P_{t}$ would be generated by
\begin{equation}
Gf(x)=\left(1-x^{2}\right)\frac{\partial^{2}}{\partial x^{2}}f(x),\qquad f\in\C^{2}[-1,1],x\in[-1,1].\label{eq:WF-=00005B-1,1=00005D-generator}
\end{equation}
We could now extend kernel $K$ to be from $[-1,1]$ to $\bar{\N}$
using the same formula~(\ref{eq:K-def}). Our proof of Theorem~\ref{thm:PK=00003DKQ}
would still work. We could define $\boldsymbol{G}$ by~(\ref{eq:coupled-generator-general})
where $G$ would now be defined by~(\ref{eq:WF-=00005B-1,1=00005D-generator}).
But now we could not take $\Dom(\boldsymbol{G})$ to be functions
such that $f(\cdot,y)$ are even polynomials, because they are not
dense in $\C[-1,1]$. But if we allowed all polynomials, then for
$y>0$ we could not extend $\left(\boldsymbol{G}f\right)(\cdot,y)$
to a continuous function on $[-1,1]$ because of the term $\frac{y}{x}\frac{\partial}{\partial x}f(x,y)$
in~(\ref{eq:coupled-generator-explicit}). The deeper reason for
this problem is that $K(0,y)=0$ for $y>0$, so if the process $\left(X_{t},Y_{t}\right)$
satisfies~(\ref{eq:averaging}), then after $Y_{t}$ departs from
zero, $X_{t}$ is no longer allowed to cross zero, so the behavior
of the diffusion on $[-1,0]$ and $[0,1]$ are independent. To overcome
this problem, we could define
\[
\S=[-1,1]\times\{0\}\cup\left[-1,0^{-}\right]\times\{1,2,\dots\}\cup\left[0^{+},1\right]\times\{1,2\dots\}\cup\{-1,1\}\times\{\infty\},
\]
where we think of $0^{-}$ and $0^{+}$ as two different points. Now
we could take $\Dom(\boldsymbol{G})$ to be functions such that $f(\cdot,0)$
is a polynomial, $f(\cdot,y)$ is an even polynomial with possibly
different coefficients on $\left[-1,0^{-}\right]$ and on $\left[0^{+},1\right]$
and from some $y_{0}$, $f(x,y)$ equals either $f(-1,\infty)$ or
$f(1,\infty)$ depending on whether $x$ is in $\left[-1,0^{-}\right]$
or $\left[0^{+},1\right]$. This set is dense in $\C(\S)$. Then,
however, $\left(Gf\right)(\cdot,0)$ could be discontinuous at $x=0$
because of the term $\lambda_{y}\left(f(x,y+1)-f(x,y)\right)$ in~(\ref{eq:coupled-generator-explicit}).
To get around this problem, we decided to work with the Wright-Fisher
diffusion with reflection at zero.
\end{rem}
In order to prove Theorem~\ref{thm:coupling}, we need the following
theorem due to Rogers and Pittman~\cite{rogersPitman1981markov}.
\begin{thm}
\label{thm:Markov-function}Let $\left( \boldsymbol{S}, \pmb{\mathscr{S}} \right)$
and $\left(S,\mathscr{S}\right)$ be measurable spaces and let $\phi:\boldsymbol{S}\rightarrow S$
be a measurable transformation. Let $\Lambda$ be a probability kernel
from $S$ to $\boldsymbol{S}$ and define a probability kernel from
$\boldsymbol{S}$ to $S$ by
\[
\Phi f=f\circ\phi.
\]
Let $\boldsymbol{X}_{t}$ be a continuous-time Markov process with
state space $\left( \boldsymbol{S}, \pmb{\mathscr{S}} \right)$, transition
semigroup $\boldsymbol{P}_{t}$ and initial distribution $\boldsymbol{\pi}_{0}=\pi_{0}\Lambda$,
for some distribution $\pi_{0}$ on $S$. Suppose further:
\begin{enumerate}
\item $\Lambda\Phi=I$, the identity kernel on $S$,
\item for each $t\geq0$ the probability kernel $P{}_{t}:=\Lambda\boldsymbol{P}_{t}\Phi$
from $S$ to $S$ satisfies 
\begin{equation}
\Lambda\boldsymbol{P}_{t}=P{}_{t}\Lambda.\label{eq:LP=00003DQL}
\end{equation}

\end{enumerate}
Then $P{}_{t}$ is a transition semigroup on $S$, $\phi\circ\boldsymbol{X}_{t}$
is Markov with transition semigroup $P_{t}$ and the initial distribution
$\pi_{0}$ and
\[
\P\left(\boldsymbol{X}_{t}\in A|\phi\circ\boldsymbol{X}_{s},0\leq s\leq t\right)=\Lambda\left(\phi\circ\boldsymbol{X}_{t},A\right)
\]
a.\,s.\ for all $t\geq0$ and $A \in \pmb{\mathscr{S}}$.\end{thm}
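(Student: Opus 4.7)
The plan is to compute all finite-dimensional marginals of the pair $(\phi\circ\boldsymbol{X}_t,\boldsymbol{X}_t)$ explicitly and to deduce both the Markov property of $\phi\circ\boldsymbol{X}_t$ and the conditional-distribution statement from a single inductive identity. Before the induction one checks that $P_t$ really is a semigroup on $S$: we have $P_0=\Lambda\Phi=I$ and, using hypothesis (2),
\[
P_{s+t}=\Lambda\boldsymbol{P}_s\boldsymbol{P}_t\Phi=(\Lambda\boldsymbol{P}_s)\boldsymbol{P}_t\Phi=P_s\Lambda\boldsymbol{P}_t\Phi=P_sP_t,
\]
with conservatism inherited from $\Lambda1=\boldsymbol{P}_t1=\Phi1=1$.

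The main step is an induction on $n\ge0$ showing that for all $0\le t_0\le\cdots\le t_n$, all bounded measurable $g_0,\dots,g_n$ on $S$, and all bounded measurable $h$ on $\boldsymbol{S}$,
\[
\E\!\left[\prod_{i=0}^n g_i(\phi\circ\boldsymbol{X}_{t_i})\,h(\boldsymbol{X}_{t_n})\right]=\int\pi_0(\d x_0)g_0(x_0)\prod_{i=1}^n\int P_{t_i-t_{i-1}}(x_{i-1},\d x_i)g_i(x_i)\int\Lambda(x_n,\d\boldsymbol{z})h(\boldsymbol{z}).
\]
The base case $n=0$ uses $\boldsymbol{\pi}_0=\pi_0\Lambda$ together with the fact---immediate from $\Lambda\Phi=I$---that each $\Lambda(x,\cdot)$ is concentrated on the fiber $\phi^{-1}(x)$, so that $g_0\circ\phi$ equals the constant $g_0(x)$ under $\Lambda(x,\cdot)$. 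For the inductive step, the Markov property of $\boldsymbol{X}_t$ at time $t_{n-1}$ collapses the $n$-th factor into a function $\tilde h(\boldsymbol{z}):=\boldsymbol{P}_{t_n-t_{n-1}}(\Phi g_n\cdot h)(\boldsymbol{z})$ evaluated at $\boldsymbol{X}_{t_{n-1}}$; the inductive hypothesis then leaves $(\Lambda\boldsymbol{P}_{t_n-t_{n-1}})(\Phi g_n\cdot h)(x_{n-1})$ as the innermost integrand. Hypothesis (2) rewrites this as $(P_{t_n-t_{n-1}}\Lambda)(\Phi g_n\cdot h)(x_{n-1})$, and the fiber-concentration of $\Lambda$ then lets us replace $g_n\circ\phi$ by $g_n(x_n)$ inside the inner $\Lambda(x_n,\d\boldsymbol{z})$-integral, yielding the claimed right-hand side.

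Setting $h\equiv1$ produces the finite-dimensional distributions of $\phi\circ\boldsymbol{X}_t$ and shows it to be Markov with transition semigroup $P_t$ and initial law $\pi_0$. For the conditional-distribution conclusion, the identity with general $h$ rearranges to
\[
\E\!\left[\prod_{i=0}^n g_i(\phi\circ\boldsymbol{X}_{t_i})\,h(\boldsymbol{X}_{t_n})\right]=\E\!\left[\prod_{i=0}^n g_i(\phi\circ\boldsymbol{X}_{t_i})\,(\Lambda h)(\phi\circ\boldsymbol{X}_{t_n})\right],
\]
so $(\Lambda h)(\phi\circ\boldsymbol{X}_t)$ is a version of $\E[h(\boldsymbol{X}_t)\mid\phi\circ\boldsymbol{X}_{t_0},\dots,\phi\circ\boldsymbol{X}_{t_n}]$ whenever $t_n=t$. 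A functional monotone class argument extends this to the $\sigma$-field $\sigma(\phi\circ\boldsymbol{X}_s:\,0\le s\le t)$, giving the claimed formula for $\P(\boldsymbol{X}_t\in A\mid\phi\circ\boldsymbol{X}_s,0\le s\le t)$.

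The substantive work is the inductive step, which draws on both hypotheses in one stroke: hypothesis (2) converts the $\boldsymbol{S}$-side transition into an $S$-side one, while hypothesis (1) makes $g_n\circ\phi$ effectively constant inside the inner $\Lambda$-integral so that $g_n(x_n)$ can be pulled out. The passage from identities on cylinder events to conditioning on the full $\sigma$-field over an uncountable time interval is routine but should be phrased via a monotone class argument rather than asserted.
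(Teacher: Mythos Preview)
Your argument is correct and self-contained; it is essentially the Rogers--Pitman proof written out in full. The paper, by contrast, does not reprove the result at all: its entire proof is to cite \cite[Theorem~2]{rogersPitman1981markov} for the case $\pi_0=\delta_y$ and then remark that the general initial distribution follows by integrating over $\pi_0$. So you are doing substantially more than the paper does here, but the work is sound and matches the cited source in spirit---the inductive computation of the joint finite-dimensional laws, with hypothesis~(2) converting the $\boldsymbol{S}$-transition into an $S$-transition and hypothesis~(1) pinning $\Lambda(x,\cdot)$ to the fiber $\phi^{-1}(x)$, is exactly the mechanism behind the original theorem.

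One small sloppiness: your base case as written only handles $t_0=0$, since you invoke $\boldsymbol{\pi}_0=\pi_0\Lambda$ directly without a transition to time $t_0$. Either fix $t_0=0$ throughout (harmless, because any finite list of times can be prepended with $0$ and $g\equiv 1$), or apply the identity $\Lambda\boldsymbol{P}_{t_0}=P_{t_0}\Lambda$ once more at the start to reach positive $t_0$.
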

\begin{proof}
Rogers and Pittman~\cite[Theorem 2]{rogersPitman1981markov} proved
this for the case that $\pi_{0}=\delta_{y}$ for some $y\in S$. The
general case follows by integration with respect to $\pi_{0}$.
\end{proof}

\begin{proof}[Proof of Theorem~\ref{thm:coupling}.]

It is intuitively clear from the form of $\boldsymbol{G}$ that $Y_{t}$
on its own is generated by $H$, but here we give a short formal proof.
Note that for $f\in\C\left(\bar{\N}\right)$ and $s,t\geq0$, 
\[
\E\left(f\left(Y_{s+t}\right)|X_{u},Y_{u},0\leq u\leq s\right)=\left(\boldsymbol{P}_{t}\Psi f\right)\left(X_{s},Y_{s}\right),
\]
where $\boldsymbol{P}_{t}$ is the semigroup generated by $\boldsymbol{G}$
and $\Psi$ is a kernel given by $\Psi f=f\circ\psi$ where $\psi(x,y)=y$.
Hence, in order to prove~(\ref{eq:second-margin}) we need to show
that 
\begin{equation}
\boldsymbol{P}_{t}\Psi f=\Psi Q_{t}f\label{eq:second-margin-semigroup}
\end{equation}
for all $f\in\C\left(\bar{\N}\right)$. By Theorem~\ref{thm:Semigroup-intertwining}
it suffices to prove that there exists a core $D_{H}$ of $H$ such
that $\Psi$ maps $D_{H}$ into $\Dom(\boldsymbol{G})$ and
\begin{equation}
\boldsymbol{G}\Psi=\Psi H\label{eq:second-margin-generator-intertwining}
\end{equation}
on $D_{H}$. It is shown in Lemma~\ref{lem:H-core} that
\[
D_{H}=\left\{ f\in\C\left(\bar{\N}\right);\,\exists y_{0}\,\mathrm{s.t.}\,f(y)=f(\infty)\,\mathrm{for\,all}\,y>y_{0}\right\} 
\]
is a core of $H$ and it is easy to see that for $f\in D_{H}$, $\Psi f$
is in $\Dom(\boldsymbol{G})$. Moreover, for $f\in D_{H}$ we have
$\left(Gf(y)K(\cdot,y)\right)(x)=f(y)\left(GK(\cdot,y)\right)(x)$,
so from~(\ref{eq:coupled-generator-general}) it is easy to see that~(\ref{eq:second-margin-generator-intertwining})
holds.

In order to prove the claims about $X_{t}$, we will use Theorem~\ref{thm:Markov-function}.
In the present setting, $S=[0,1]$ and $\phi(x,y)=x$. Define a probability
kernel from $[0,1]$ to $\S$ by
\begin{eqnarray*}
\Lambda(x,A\times\{y\}) & = & \delta_{x}(A)K(x,y)\\
\Lambda(x,\left(1,\infty\right)) & = & K(x,\infty)
\end{eqnarray*}
where $x$ is in $[0,1]$, $A$ is in $\B[0,1]$ and $y\in\N$. In
other words, 
\begin{equation}
\Lambda f(x)=\sum_{0\leq y\leq\infty}K(x,y)f(x,y)\label{eq:lf}
\end{equation}
for $f\in\C(\S)$ and $x\in[0,1]$. Observe that $\pi_{0}^{X}\Lambda=\boldsymbol{\pi}_{0}^{(X,Y)}$.
Also observe that for $f\in\C[0,1]$ and $x\in[0,1]$ we have
\[
\Lambda\Phi f(x)=\sum_{0\leq y\leq\infty}K(x,y)f(x)=f(x),
\]
hence 
\begin{equation}
\Lambda\Phi=I.\label{eq:LPhi=00003DI}
\end{equation}

Let us now prove that 
\begin{equation}
\Lambda\boldsymbol{P}_{t}=P_{t}\Lambda.\label{eq:LP=00003DPL}
\end{equation}
By Theorem~\ref{thm:Semigroup-intertwining}, it suffices to prove
that $\Lambda$ maps $\Dom(\boldsymbol{G})$ into $\C^{2}[0,1]$ and
\begin{equation}
\Lambda\boldsymbol{G}=G\Lambda\label{eq:coupled-intertwining}
\end{equation}
on $\Dom(\boldsymbol{G})$. Let $f$ be in $\Dom(\boldsymbol{G})$.
Then there is $y_{0}$ such that $f(x,y)=f(1,\infty)$ for $y>y_{0}$.
Since we know that $\Lambda\boldsymbol{G}1=G\Lambda1=0$, we may without
loss of generality assume that $f(\infty)=0$. Then 
\[
\Lambda f(x)=\sum_{y=0}^{y_{0}}K(x,y)f(x,y),
\]
which is a polynomial, hence in $\C^{2}[0,1]$.

By~(\ref{eq:coupled-generator-general}) we have for $x\in(0,1)$
and $y\leq y_{0}$ that
\begin{equation}
K(x,y)\left(\boldsymbol{G}f\right)(x,y)=K(x,y)\left(Hf\left(\cdot,x\right)\right)(y)+G\left(f\left(\cdot,y\right)K\left(\cdot,y\right)\right)(x)-f(x,y)\left(GK\left(\cdot,y\right)\right)(x).\label{eq:KG}
\end{equation}
Since both sides of the equality are continuous in $x$, the equality
also holds for all $x\in[0,1]$. Using~(\ref{eq:lf}), (\ref{eq:KG})
and noting that $\left(\boldsymbol{G}f\right)(x,y)=0$ for $y>y_{0}$
by~(\ref{eq:coupled-generator-general}) and $\boldsymbol{G}f(1,\infty)=0$
by definition, we get 
\[
\Lambda\boldsymbol{G}f(x)=\sum_{y=0}^{y_{0}}K(x,y)\left(Hf\left(x,\cdot\right)\right)(y)+\sum_{y=0}^{y_{0}}G\left(f\left(\cdot,y\right)K\left(\cdot,y\right)\right)(x)-\sum_{y=0}^{y_{0}}f(x,y)\left(GK\left(\cdot,y\right)\right)(x).
\]
The second term is just $G\Lambda f$, since $f(x,y)=0$ for $y>y_{0}$.
The first term can be rewritten as
\[
\sum_{y=0}^{y_{0}}K(x,y)\sum_{z=0}^{y_{0}}H(y,z)f(x,z),
\]
where we have again used that $f(x,z)=0$ for $z>y_{0}$. The last
term can be written as
\begin{eqnarray*}
\sum_{y=0}^{y_{0}}f(x,y)\left(GK\indicator_{\{y\}}\right)(x) & = & \sum_{y=0}^{y_{0}}f(x,y)\left(KH\indicator_{\{y\}}\right)(x)\\
 & = & \sum_{y=0}^{y_{0}}f(x,y)\sum_{z=0}^{y_{0}}K(x,z)H(z,y)
\end{eqnarray*}
where in the first equality we have used Theorem~\ref{thm:GK=00003DKH}
and in the second equality we have used that $H$ is an upper triangular
matrix. Therefore, $\Lambda\boldsymbol{G}f=G\Lambda f$.

Finally, from (\ref{eq:LPhi=00003DI}) and (\ref{eq:LP=00003DPL})
we get that $P_{t}=\Lambda\boldsymbol{P}_{t}\Phi$. Thus, we have
verified all requirements of Theorem~\ref{thm:Markov-function}.
It follows that $X_{t}$ is the Wright-Fisher diffusion with reflection
at zero with the initial distribution $\pi_{0}^{X}$ and 
\begin{eqnarray*}
\P\left(Y_{t}=y|X_{s},0\leq s\leq t\right) & = & \Lambda\left(X_{t},[0,1]\times\{y\}\right)\\
 & = & K\left(X_{t},y\right)\,a.s.
\end{eqnarray*}
for $y\in\bar{\N}$.
\end{proof}

\subsection{\label{sub:Generator-derivation}Derivation of the generator for
the coupled process}

In this section we show how formula~(\ref{eq:coupled-generator-general})
for the generator of the coupled process can be derived. Strictly
speaking, this derivation is not necessary since (\ref{eq:coupled-generator-general})
can be taken as a definition (and we therefore choose to make this
derivation informal for the sake of brevity). However, we believe
that this derivation can provide insight into the problem. We use
the technique of Diaconis and Fill~\cite{diaconisFill90strong-stationary-times}
who derived an analogous result for Markov processes with discrete
space and time. Fill~\cite{fill92strong-stationary-duality} then
extended the result to Markov processes with continuous time and discrete
space. In Fill's setting, $P_{t}$ and $Q_{t}$ are transition semigroups
of Markov processes with discrete state-spaces $\mathcal{S}_{1}$
and $\mathcal{S}_{2}$. For a fixed $t>0$, he defines a probability
kernel on 
\[
\mathcal{S}=\left\{ (x,y)\in\mathcal{S}_{1}\times\mathcal{S}_{2};\,K(x,y)>0\right\} .
\]
by
\begin{equation}
\boldsymbol{P}^{(t)}f=Q_{t}\indicator_{\left[P_{t}K\neq0\right]}\frac{P_{t}fK}{P_{t}K}.\label{eq:P^t-discrete-ananlogy-compact}
\end{equation}
In~(\ref{eq:P^t-discrete-ananlogy-compact}), $P_{t}fK$ denotes
the application of semigroup $P_{t}$ to the product of functions
$f$ and $K$ (here, $K$ is not viewed as a kernel but simply as
a function of $x$ and $y$). Although $P_{t}$ normally acts on functions
of $x$, we make it act on functions of $x$ and $y$ by fixing $y$.
Similarly, $P_{t}K$ denotes the application of $P_{t}$ to $K$ (this
can alternatively be interpreted as the composition of the two kernels).
Then we take the pointwise division of $P_{t}fK$ and $P_{t}K$, which
we define to be zero if the denominator is zero. We then get $\boldsymbol{P}^{(t)}f$
as the application of $Q_{t}$ to this function. This time, $x$ is
considered fixed when we apply $Q_{t}$.

It turns out that $\boldsymbol{P}^{(t)}$ does not satisfy the Chapman-Kolmogorov
equations and hence cannot be used to construct the coupled process
directly. However, Fill proves that there exists a generator $\boldsymbol{G}$
on $\mathcal{S}$ (for which he gives an explicit formula) such that
\begin{equation}
\frac{\boldsymbol{P}^{(t)}-I}{t}\rightarrow\boldsymbol{G}\label{eq:Fill-generator}
\end{equation}
as $t\downarrow0$. He then shows the bivariate Markov process associated
with $\boldsymbol{G}$ (with suitable initial distribution) has the
desired properties, i.\,e. its margins on their own are processes
with transition semigroups $P_{t}$ and $Q_{t}$ and satisfy~(\ref{eq:intr-coupling}).

Now we return back to our setting where $P_{t}$ is the semigroup
of the Wright-Fisher diffusion with reflection at zero and $Q_{t}$
is the semigroup of an explosive pure birth process. Note that~(\ref{eq:P^t-discrete-ananlogy-compact})
is not a suitable definition in this case, since $P_{t}$ operates
on continuous functions, but the indicator in (\ref{eq:P^t-discrete-ananlogy-compact})
can introduce discontinuity. To get around this problem, it can be
proved that $\frac{P_{t}fK}{P_{t}K}$ can be extended to a continuous
function (Hudec~\cite[Theorem 3.10]{hudec2016Cascades} proves this
for a slightly different kernel $K$, but his proof can easily be
adapted to our setting). Now we can define $\boldsymbol{P}^{(t)}$
by

\begin{equation}
\boldsymbol{P}^{(t)}f=Q_{t}\frac{P_{t}fK}{P_{t}K}.\label{eq:approximation-of-coupled-semigroup}
\end{equation}
Observe that $P_{t}fK\rightarrow fK$, $P_{t}K\rightarrow K$ and
$\frac{P_{t}fK}{P_{t}K}\rightarrow\frac{fK}{K}=f$ as $t\downarrow0$
(provided we choose $\left(x,y\right)$ such that $K(x,y)>0$). Hence,
\[
\frac{1}{t}\left(\boldsymbol{P}^{(t)}f-f\right)=\frac{1}{t}\left(Q_{t}-1\right)\frac{P_{t}fK}{P_{t}K}+\frac{\frac{1}{t}\left(\left(P_{t}-1\right)fK\right)}{P_{t}K}-f\frac{\frac{1}{t}\left(\left(P_{t}-1\right)K\right)}{P_{t}K}
\]
is expected to converge to
\[
Hf+\frac{GfK}{K}-f\frac{GK}{K}.
\]

\appendix

\section{Wright-Fisher diffusion and an explosive pure birth process}

Recall that $G$ is defined by
\[
Gf(x)=\left(1-x^{2}\right)\frac{\partial^{2}}{\partial x^{2}}f(x)
\]
where $x$ is in $[0,1]$ and $f$ is in $\C^{2}[0,1]$ such that
$\frac{\partial}{\partial x}f(0)=0$.
\begin{thm}
Operator $G$ is closable and its closure generates a Feller semigroup.
Moreover, the associated Markov process has continuous sample paths.\end{thm}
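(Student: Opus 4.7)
The plan is to mirror the proof of Theorem~\ref{thm:generator} by invoking Proposition~\ref{prop:Hille-Yosida-subspaces}. Take $L_n$ to be the space of even polynomials on $[0,1]$ of degree at most $2n$. Every $f\in L_n$ satisfies $f'(0)=0$ automatically, so $L_n\subseteq\Dom(G)$; the direct computation $G(x^{2k})=2k(2k-1)x^{2k-2}-2k(2k-1)x^{2k}$ shows $G(L_n)\subseteq L_n$. Because $x\mapsto x^2$ is a homeomorphism of $[0,1]$ onto itself, Stone--Weierstrass implies that the even polynomials are dense in $\C[0,1]$, so $\bigcup_n L_n$ is dense. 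For the positive maximum principle, suppose $f\in\Dom(G)$ attains a nonnegative supremum at $x_0\in[0,1]$. If $x_0\in(0,1)$ then $f''(x_0)\leq 0$ and $1-x_0^2>0$, so $Gf(x_0)\leq 0$; if $x_0=1$ then $1-x_0^2=0$ and $Gf(1)=0$; if $x_0=0$, the boundary condition $f'(0)=0$ together with the maximum property forces $f''(0)\leq 0$ by Taylor expansion, so $Gf(0)\leq 0$. Combined with $G1=0$, Proposition~\ref{prop:Hille-Yosida-subspaces} then yields that $G$ is closable and its closure generates a Feller semigroup $P_t$.

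For the continuity claim I would use Dynkin's criterion: it suffices to show $\lim_{t\downarrow 0} t^{-1}\P_x(|X_t-x|>\epsilon)=0$ for every $x\in[0,1]$ and $\epsilon>0$. For each such pair, pick a cutoff $f\in\C^2[0,1]$ with $f'(0)=0$, $0\leq f\leq 1$, $f\equiv 0$ on an open neighborhood of $x$, and $f\equiv 1$ outside $\{y:|y-x|<\epsilon\}$. Such an $f$ is easily constructed: near $x=0$ take $f$ constant zero on $[0,\delta]$; otherwise take $f$ constant $1$ near $0$ so that the boundary condition is vacuous. Then $f\in\Dom(G)$, so $t^{-1}(P_tf-f)\to Gf$ in $\C[0,1]$; in particular $t^{-1}P_tf(x)\to Gf(x)=(1-x^2)f''(x)=0$, since $f\equiv 0$ near $x$. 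Because $f\geq\indicator_{[|y-x|>\epsilon]}$ pointwise, we get $t^{-1}\P_x(|X_t-x|>\epsilon)\leq t^{-1}P_tf(x)\to 0$, which is Dynkin's criterion and implies continuous paths.

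I expect the main obstacle to be this path-continuity step: the Hille--Yosida application is routine given the polynomial structure of $G$, but ruling out jumps requires producing, for each point and each scale, a cutoff function lying in $\Dom(G)$. The boundary condition $f'(0)=0$ is the only thing that needs a moment of care at $x=0$, and it is precisely reflection at zero (as opposed to the diffusion on $[-1,1]$) that makes this cutoff construction compatible with the domain. Once the cutoff is in hand, locality of $G$ and Dynkin's criterion close the proof quickly.
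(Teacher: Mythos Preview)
Your Hille--Yosida half is identical to the paper's: same choice of $L_n$ (even polynomials of degree $\le 2n$), same invariance check, same density via Stone--Weierstrass, same three-case verification of the positive maximum principle.

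For path continuity you take a different but equally valid route. You build a $\C^2$ cutoff satisfying $f'(0)=0$ and then invoke Dynkin's criterion via $t^{-1}P_tf(x)\to Gf(x)=0$. The paper instead appeals to the Ethier--Kurtz locality criterion (their Proposition~4.2.9 and Remark~4.2.10), exhibiting for each $x_0$ and $\epsilon$ the explicit even polynomial
\[
f(x)=1-\bigl(x^2-x_0^2\bigr)^4,
\]
which lies in the core automatically, has its unique maximum at $x_0$, and satisfies $f''(x_0)=0$ so that $Gf(x_0)=0$. Both arguments exploit nothing more than the locality of $G$; the paper's is a little slicker because the polynomial sidesteps any boundary-condition bookkeeping at $0$, whereas your cutoff construction has to handle $x=0$ by cases (which you do correctly). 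One minor caution: the textbook form of Dynkin's criterion asks for $\sup_{x}t^{-1}\P_x(|X_t-x|>\epsilon)\to 0$, not just the pointwise limit you state. Your argument as written yields the pointwise version; on the compact interval $[0,1]$ a finite-cover variant of your cutoff (use finitely many $f_i$ vanishing on overlapping $\epsilon/2$-balls, and the uniform convergence $t^{-1}(P_tf_i-f_i)\to Gf_i$) immediately upgrades this to the uniform statement, so the gap is cosmetic.
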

\begin{proof}
In order to prove that $G$ is closable and generates a Feller semigroup,
we will use Proposition~\ref{prop:Hille-Yosida-subspaces}. It is
obvious that $G1=0$. Moreover, if we define $L_{n}$ as the set of
all even polynomials of order at most $2n$, then $\bigcup_{n\in\N}L_{n}$
is dense in $\C[0,1]$ by the Stone-Weierstrass theorem and $G$ maps
$L_{n}$ into $L_{n}$. Finally we prove that $G$ satisfies the positive
maximum principle. Let $f$ be in $\Dom\left(G\right)$ and $x_{0}\in\left[0,1\right]$
be such that $\sup_{x\in\left[0,1\right]}f(x)=f\left(x_{0}\right)$.
If $x_{0}\in\left(0,1\right)$ then $\frac{\partial^{2}f}{\partial x^{2}}\left(x_{0}\right)\leq0$.
If $x_{0}=0$, then $\frac{\partial^{2}f}{\partial x^{2}}\left(x_{0}\right)\leq0$,
since $\frac{\partial}{\partial x}f\left(x_{0}\right)=0$. If $x_{0}=1$
then $1-x_{0}^{2}=0$. In all cases, $Gf\left(x_{0}\right)\leq0$.

To prove that almost all sample paths are continuous, it suffices
to show that for each $x_{0}\in[0,1]$ and $\epsilon>0$ there exists
$f\in\Dom(G)$ such that $f\left(x_{0}\right)=\|f\|$, $\sup_{x\in[0,1]\backslash\left(x_{0}-\epsilon,x_{0}+\epsilon\right)}f(x)<\|f\|$
and $Gf\left(x_{0}\right)=0$ (Ethier and Kurtz~\cite[Proposition~4.2.9 and Remark~4.2.10]{EthierKurz}).
Let $x_{0}\in[0,1]$ and $\epsilon>0$ be given. Define
\[
f(x)=1-\left(x^{2}-x_{0}^{2}\right)^{4}.
\]
Then $f\geq0$ on $[0,1]$ and it attains its unique maximum at $x_{0}$.
Hence
\[
\sup_{x\in[0,1]\backslash\left(x_{0}-\epsilon,x_{0}+\epsilon\right)}f(x)<f\left(x_{0}\right)=\|f\|.
\]
Moreover, 
\[
\frac{\partial^{2}}{\partial x^{2}}f\left(x_{0}\right)=0,
\]
so $Gf\left(x_{0}\right)=0$.
\end{proof}
Recall that $H:\R^{\Nbar}\rightarrow\R^{\Nbar}$ is an operator defined
by
\begin{eqnarray*}
Hf(y) & = & \lambda_{y}\left(f(y+1)-f(y)\right),\quad y\in\N,\\
Hf(\infty) & = & 0,
\end{eqnarray*}
where
\[
\lambda_{y}=\left(2y+1\right)\left(2y+2\right),\quad y\in\N.
\]

\begin{prop}
\label{prop:birth-generator-is-feller} Define $D=\left\{ f\in\C\left(\Nbar\right);\,Hf\in\C\left(\Nbar\right)\right\} $.
Then the restriction of $H$ to $D$ is the generator of a Feller
semigroup on $\C\left(\Nbar\right)$.\end{prop}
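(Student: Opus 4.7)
The plan is to apply Proposition~\ref{prop:Hille-Yosida-subspaces} to the restriction of $H$ to the set $D_H$ from~(\ref{eq:D_H}), thereby obtaining a closure $\bar{H}$ that generates a Feller semigroup, and then to identify $\bar{H}$ with the full operator $H|_D$.

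For the first step I would verify the three hypotheses of Proposition~\ref{prop:Hille-Yosida-subspaces} with $G = H|_{D_H}$. Conservativity $H \cdot 1 = 0$ is immediate. The positive maximum principle is straightforward: at a point $y_0$ where $f \in D_H$ attains a nonnegative maximum, either $y_0 = \infty$ and $Hf(\infty) = 0$ by definition, or $y_0 \in \N$ and $f(y_0+1) \le f(y_0)$ forces $Hf(y_0) = \lambda_{y_0}(f(y_0+1)-f(y_0)) \le 0$. For the filtration by finite-dimensional invariant subspaces I would take
\[
L_n = \{ f \in \C(\bar{\N}) ;\, f(y) = f(\infty) \text{ for all } y \ge n \},
\]
so that $L_n \subseteq D_H$ has dimension $n+1$, the union $\bigcup_n L_n$ equals $D_H$ and is dense in $\C(\bar{\N})$, and $H$ preserves each $L_n$ (the only nontrivial value being $Hf(n-1) = \lambda_{n-1}(f(\infty) - f(n-1))$). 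The proposition then gives that $H|_{D_H}$ is closable with closure $\bar{H}$ generating a Feller semigroup.

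The remaining identification $\bar{H} = H|_D$ is where the real work lies. The inclusion $\bar{H} \subseteq H|_D$ is a routine limit argument: if $f_n \in D_H$ with $f_n \to f$ and $Hf_n \to g$ uniformly, then $Hf_n(y) \to Hf(y)$ pointwise at every $y \in \bar{\N}$ from the explicit formula for $H$, so $g = Hf \in \C(\bar{\N})$ and hence $f \in D$.

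The main obstacle is the opposite inclusion $H|_D \subseteq \bar{H}$, which I would derive from the surjectivity of $\lambda - \bar{H}$ on $\C(\bar{\N})$ (valid because $\bar{H}$ is already known to be a generator). Given $f \in D$, I would write $g = (\lambda - H)f \in \C(\bar{\N})$ and solve $(\lambda - \bar{H})\tilde{f} = g$ with $\tilde{f} \in \Dom(\bar{H}) \subseteq D$, reducing the problem to showing that the kernel of $\lambda - H$ on $D$ is trivial. The equation $(\lambda - H)h = 0$ recursively produces $h(y) = h(0)\prod_{k=0}^{y-1}(1 + \lambda/\lambda_k)$ together with $h(\infty) = g(\infty)/\lambda = 0$; since every factor in the product is at least $1$ we have $|h(y)| \ge |h(0)|$ for all $y$, and continuity of $h$ at $\infty$ forces $h(0) = 0$, hence $h \equiv 0$. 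This gives $f = \tilde{f} \in \Dom(\bar{H})$ and completes the identification $H|_D = \bar{H}$.
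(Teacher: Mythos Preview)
Your argument is correct, including the resolvent-kernel step (the minor slip ``$h(\infty)=g(\infty)/\lambda$'' should simply read $h(\infty)=0$ from $(\lambda-H)h(\infty)=\lambda h(\infty)=0$, but the conclusion is right). It is, however, more elaborate than the paper's proof.

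The paper applies Proposition~\ref{prop:Hille-Yosida-subspaces} directly to $H|_D$ rather than to $H|_{D_H}$: the same subspaces $L_n$ sit inside $D$, the positive maximum principle is checked for $f\in D$, and the proposition yields that the closure of $H|_D$ generates a Feller semigroup. Since $H|_D$ is manifestly closed (if $f_n\to f$ and $Hf_n\to g$ uniformly, the pointwise formula for $H$ forces $g=Hf$, so $f\in D$), this closure is $H|_D$ itself and the statement follows immediately. Your route instead starts from the smaller domain $D_H$, which forces you to carry out the identification $\bar H=H|_D$ by hand; the payoff is that your argument simultaneously proves that $D_H$ is a core (Lemma~\ref{lem:H-core}), whereas the paper establishes that separately via invariance of $D_H$ under the semigroup. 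So both approaches are valid: the paper's is shorter, yours packages the core property into the same computation.
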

\begin{proof}
We verify the conditions of Proposition~\ref{prop:Hille-Yosida-subspaces}.
First note that $H1=0\in\C\left(\Nbar\right)$, hence $1$ is in $D$.
Second, if we define $L_{n}$ as the set of all functions $f\in\C\left(\Nbar\right)$
such that $f(y)=f(\infty)$ for all $y>n$, then $\bigcup_{n\in\N}L_{n}$
is dense in $\C\left(\Nbar\right)$ and $H$ maps $L_{n}$ to $L_{n}$.
Finally, we verify the positive maximum principle. Let $f$ be in
$D$ and $y$ in $\Nbar$ such that
\[
\sup_{y\in\Nbar}f(y)=f\left(y_{0}\right).
\]
If $y_{0}<\infty$, then
\[
Hf\left(y_{0}\right)=\lambda_{y_{0}}\left(f\left(y_{0}+1\right)-f\left(y_{0}\right)\right)\leq0,
\]
and if $y_{0}=\infty$, then $Hf\left(y_{0}\right)=0$, so $H$ satisfies
the positive maximum principle.\end{proof}
\begin{lem}
\label{lem:H-core}The set
\[
D_{H}=\left\{ f\in\C\left(\bar{\N}\right);\,\exists y_{0}\,\mathrm{s.t.}\,f(y)=f(\infty)\,\forall y>y_{0}\right\} 
\]
is a core of $H$.\end{lem}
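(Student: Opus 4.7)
The plan is to prove directly that every $f \in \Dom(H)$ can be approximated in the graph norm of $H$ by a sequence from $D_H$; since $H$ is closed, this is equivalent to $D_H$ being a core. As approximants I would take the naive truncations
\[
f_n(y) = \begin{cases} f(y), & y \le n, \\ f(n), & n < y \le \infty, \end{cases}
\]
each of which is constant from $n$ onward and therefore belongs to $D_H$.

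Uniform convergence $f_n \to f$ in $\C(\bar{\N})$ is immediate from the continuity of $f$ at $\infty$, since $|f_n(y) - f(y)|$ vanishes for $y \le n$ and is bounded by $|f(n) - f(\infty)| + |f(y) - f(\infty)|$ otherwise. For the generator side, a short direct computation shows $Hf_n(y) = Hf(y)$ for $y < n$, while $Hf_n(y) = 0$ for every $y \ge n$ (including $y = \infty$). Hence
\[
\|Hf_n - Hf\|_\infty = \sup_{y \ge n} |Hf(y)|,
\]
which tends to $0$ because $Hf \in \C(\bar{\N})$ and $Hf(\infty) = 0$ by the very definition of $H$ at $\infty$.

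The one step that deserves comment is the boundary term at $y = n$, where $Hf_n(n) - Hf(n) = -\lambda_n\bigl(f(n+1) - f(n)\bigr)$. Because $\lambda_n$ grows like $4n^2$, this difference cannot be controlled using only the continuity of $f$ at $\infty$; the point is that the difference is exactly $-Hf(n)$, so it tends to zero simply because $f$ was assumed to lie in $\Dom(H)$. This is what makes the crude cutoff succeed and why no smoother cutoff is needed; it is also the main (and only) obstacle to the argument, so once it is noticed the proof is essentially a one-line calculation together with continuity of $f$ and $Hf$ at infinity.
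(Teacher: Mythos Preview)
Your argument is correct. The truncations $f_n$ lie in $D_H$, converge uniformly to $f$ by continuity of $f$ at $\infty$, and the identity $\|Hf_n - Hf\|_\infty = \sup_{y\ge n}|Hf(y)|$ holds exactly as you computed; the right-hand side tends to zero because $Hf\in\C(\bar{\N})$ with $Hf(\infty)=0$ (this is precisely the description of $\Dom(H)$ given in Proposition~\ref{prop:birth-generator-is-feller}). Your remark about the boundary term at $y=n$ is the crux of the matter and is handled correctly.

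The paper takes a different route: it observes that $D_H$ is dense in $\C(\bar{\N})$ and invariant under the semigroup $Q_t$ (because the pure birth process can only jump upward, so if $f$ is constant from some $y_0$ on, then so is $Q_tf$), and then invokes Proposition~1.3.3 of Ethier and Kurtz, which says that a dense, semigroup-invariant subspace of the domain is automatically a core. This is shorter but relies on an external black box and on the probabilistic structure of $Q_t$. Your proof is more elementary and self-contained; it uses only the explicit description of $\Dom(H)$ and never touches the semigroup. The trade-off is that the paper's argument would transfer verbatim to any monotone jump process, whereas yours exploits the specific fact that membership in $\Dom(H)$ already forces $\lambda_y(f(y+1)-f(y))\to 0$, which is what tames the growing rates $\lambda_y$.
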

\begin{proof}
It is easy to see that $D_{H}$ is dense in $\C\left(\Nbar\right)$.
Moreover, since the process associated with $H$ can only jump upward,
the semigroup maps $D_{H}$ into itself. The statement of the lemma
now follows from Proposition~1.3.3 in Ethier and Kurtz~\cite{EthierKurz}.
\end{proof}

\section*{Acknowledgement}

This paper is based on the author's Master thesis~\cite{hudec2016Cascades}.
I would like to thank my thesis supervisor, Jan~M.~Swart, for his
useful comments and suggestions that were helpful in writing the thesis
as well as preparing this paper.

\bibliographystyle{plain}
\bibliography{Intertwining_of_the_Wright-Fisher_Diffusion}

\end{document}